\newtheorem{theorem}{Theorem}[section]
\newtheorem{lemma}[theorem]{Lemma}
\newtheorem{proposition}[theorem]{Proposition}
\newtheorem{corollary}[theorem]{Corollary}
\def\Z{{\mbox{\rm\kern.25em
\vrule width.03em height0.57ex depth0ex
\kern.033em
\vrule width.03em height1.52ex depth-0.96ex \kern-.338em Z}}}
\def\R{{\mbox{\rm I\kern-.22em R}}}
\def\N{{\mbox{\rm I\kern-.22em N}}}
\def\max{{\rm max}}
\def\sgn{{\rm sgn}}
\def\ker{{\rm Ker}}
\def\M{{\cal{M}}}
\def\C{{\cal{C}}}
\def\M{{\cal{M}}}
\def\E{{\cal{E}}}
\def\dist{{\rm dist}}
\def\111{\gamma}
\def\be#1{\begin{equation}\label{#1}}
\def\bas{\begin{align*}}
\def\eas{\end{align*}}
\def\bi{\begin{itemize}}
\def\ei{\end{itemize}}
\newenvironment{proof}{\noindent {\bf Proof} }{\endprf\par}
\def \endprf{\hfill  {\vrule height6pt width6pt depth0pt}\medskip}
\def\emph#1{{\it #1}}
\title[]{Some remarks on the $n$-linear Hilbert transform for $n\geq 4$}
\author{Camil Muscalu}
\address{Department of Mathematics, Cornell University, Ithaca, NY 14853}
\email{camil@@math.cornell.edu}
\begin{document}

\begin{abstract}
We prove that for every integer $n\geq 4$, the $n$-linear operator whose symbol is given by a product of two generic symbols of
$n$-linear Hilbert transform type, does not satisfy any $L^p$ estimates similar to those in H\"{o}lder inequality. Then, we extend this result to multilinear operators
whose symbols are given by a product of an arbitrary number of generic symbols of $n$-linear Hilbert transform kind. 
As a consequence, under the same assumption $n\geq 4$, these immediately imply that for any $1<p_1, ..., p_n \leq \infty$ and $0<p<\infty$ with $1/p_1+ ... +1/p_n = 1/p$,
there exist non-degenerate subspaces $\Gamma \subseteq \R^n$ of maximal dimension $n-1$, and Mikhlin symbols $m$ singular along $\Gamma$, for
which the associated  $n$-linear multiplier operators $T_m$ do not map $L^{p_1}\times ... \times L^{p_n}$ into $L^p$.
These counterexamples are in sharp contrast 
with the bilinear case, where similar operators are known to satisfy many  $L^p$ estimates of H\"{o}lder type.

\end{abstract}

\maketitle

\section{Introduction}

Let $n\geq 2$. For $\vec{\alpha} = (\alpha_1, ..., \alpha_{n-1}) \in \R^{n-1}$ an arbitrary vector, consider the expression

\begin{equation}\label{1}
\int_{\R^n}
\sgn (\xi + \alpha_1 \xi_1 + ... + \alpha_{n-1}\xi_{n-1})
\widehat{f}(\xi)
\widehat{f_1}(\xi_1) ...
\widehat{f_{n-1}}(\xi_{n-1})
e^{2\pi i x(\xi+\xi_1+ ... + \xi_{n-1})}
d \xi d\xi_1 ... d \xi_{n-1}
\end{equation}
where $f, f_1, ..., f_{n-1}$ are all Schwartz functions on the real line and $x$ is a real number. If all the entries $(\alpha_j)_j$ are different than $0$ and $1$ and also different from each other,
the $n$-linear operator from \eqref{1} is called {\it the $n$-linear Hilbert transform} and it will be denoted by $n HT_{\vec{\alpha}}(f, f_1, ..., f_{n-1})(x)$.
Notice that if one erases the symbol 

\begin{equation}\label{symbol}
\sgn (\xi + \alpha_1 \xi_1 + ... + \alpha_{n-1}\xi_{n-1})
\end{equation}
from \eqref{1}, the corresponding expression becomes the product of the functions involved $$f(x) f_1(x) ... f_{n-1}(x).$$
The main question about these operators is whether they satisfy estimates of H\"{o}lder type, more precisely if there exist $1<p_1, ..., p_n \leq \infty$ and $0<p<\infty$
with $1/p_1+ ... +1/p_n = 1/p$ so that $n HT_{\vec{\alpha}}$ can be naturally extended as a bounded $n$-linear operator from $L^{p_1}\times ... \times L^{p_n}$ into $L^p$.

The interest in their study comes from their close connection to the so called Calder\'{o}n commutators \cite{calderon}, \cite{cm}. Indeed, a direct calculation shows that modulo a universal constant,
one has the identity

\begin{equation}\label{2}
\int_{[0,1]^{n-1}}
n HT_{\vec{\alpha}}(f, a, ..., a)(x) d\vec{\alpha} = p.v. 
\int_{\R}\frac{(A(x) - A(y))^{n-1} }{(x-y)^n } f(y) d y
\end{equation}
with $A'=a$. As one can recognize, the expression on the right hand side is precisely the $(n-1)$th commutator of Calder\'{o}n \cite{calderon}, \cite{cm}.
If $n=2$ and $\alpha$ is an arbitrary real number different than $0$ and $1$, {\it the bilinear Hilbert transform} $2 HT_{\alpha}$ does satisfy such estimates, thanks to the work of Lacey and Thiele from \cite{lt1} and \cite{lt2}.
But for $n\geq 3$ no positive results are presently known.

There are two very natural ways to generalize these operators, which will be described next.
First, for any $k\geq 1$ and arbitrary vectors $\vec{\alpha_1}, ..., \vec{\alpha_k}\in \R^{n-1}$ denote by $n HT_{\vec{\alpha_1}, ..., \vec{\alpha_k} }$ the $n$-linear operator defined by the {\it product symbol}

\begin{equation}\label{psymbol}
\prod_{j=1}^k \sgn (\xi + \alpha_{j\, 1}\xi_1 + ... + \alpha_{j\, n-1}\xi_{n-1})
\end{equation}
where $\vec{\alpha_j} : = (\alpha_{j\, 1}, ..., \alpha_{j\, n-1})$ for $j=1, ..., k$. The following theorem holds.

\begin{theorem}
For any $k\geq 1$ and generic numbers $\alpha_1, ..., \alpha_k$, the bilinear operator $2 HT_{\alpha_1, ..., \alpha_k}$ satisfies
many $L^p$ estimates of H\"{o}lder type \footnote{In the bilinear case, the vectors $\vec{\alpha_j}$ being one dimensional, can be identified with the numbers $\alpha_j$.}.
\end{theorem}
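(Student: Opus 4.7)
Write the symbol as $\sigma(\xi,\xi_1):=\prod_{j=1}^{k}\sgn(\xi+\alpha_j\xi_1)$. This is a bounded function, homogeneous of degree zero on $\R^2$, and smooth away from the union of the $k$ lines $L_j:=\{\xi+\alpha_j\xi_1=0\}$ through the origin. Genericity of the $\alpha_j$ (all distinct and different from $0$ and $1$) guarantees that the $L_j$ are pairwise distinct lines, none of which is a coordinate axis or the anti-diagonal. My plan is to decompose $\sigma$ conically in the frequency plane so that each piece reduces to one of two well understood prototypes: a Coifman-Meyer bilinear Mikhlin symbol, or the classical bilinear Hilbert transform of Lacey and Thiele.

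Concretely, I would fix a smooth, homogeneous of degree zero partition of unity $\{\phi_\ell\}_{\ell=1}^{N}$ on $\R^2\setminus\{0\}$, with each $\phi_\ell$ supported in a narrow open cone $C_\ell$ chosen so that $C_\ell$ intersects at most one of the $L_j$. Since $k$ is finite, this is clearly possible. Write $\sigma=\sum_\ell\sigma\phi_\ell$. If $C_\ell$ meets no line $L_j$, then $\sigma\phi_\ell$ is smooth and $0$-homogeneous on $\R^2\setminus\{0\}$, i.e.\ a bilinear Coifman-Meyer symbol, so the corresponding operator satisfies a wide range of H\"older type estimates by classical multilinear Calder\'on-Zygmund theory. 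If $C_\ell$ meets exactly one line, say $L_{j(\ell)}$, then inside $C_\ell$ each remaining factor $\sgn(\xi+\alpha_j\xi_1)$, $j\neq j(\ell)$, is a constant $\pm 1$, so up to a sign
$$\sigma(\xi,\xi_1)\phi_\ell(\xi,\xi_1)=\phi_\ell(\xi,\xi_1)\,\sgn(\xi+\alpha_{j(\ell)}\xi_1),$$
which is a smooth $0$-homogeneous cutoff times the symbol of $2HT_{\alpha_{j(\ell)}}$.

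Each such single line piece should map $L^{p_1}\times L^{p_2}\to L^p$ in the same H\"older range as $2HT_{\alpha_{j(\ell)}}$ itself, by the theorem of Lacey and Thiele \cite{lt1,lt2}. Summing the finitely many pieces then gives the claimed bounds on $2HT_{\alpha_1,\dots,\alpha_k}$.

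The main obstacle is the last step: cleanly verifying that the Lacey-Thiele machinery survives multiplication of $\sgn(\xi+\alpha\xi_1)$ by a smooth $0$-homogeneous cone cutoff $\phi_\ell$, as opposed to treating the bare sign function. A safe route is to expand the restriction of $\phi_\ell$ to the unit circle in Fourier series in the angular variable; each angular mode yields a modulated/tilted copy of the classical bilinear Hilbert transform, with a uniform bound coming from the Lacey-Thiele analysis, and summability of the resulting series follows from the rapid decay of the Fourier coefficients of a smooth function on $S^1$.
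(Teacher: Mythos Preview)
Your approach is correct and is essentially the same conical decomposition the paper sketches in its footnote: localize in angle, reduce each piece to the $k=1$ Lacey--Thiele operator, and sum. The one difference worth pointing out is that you introduce a \emph{smooth} angular partition of unity and then have to work to remove the cutoff (via Fourier expansion on $S^1$), whereas the paper exploits the stronger fact that the product of signs is actually \emph{constant} on each open angular sector determined by the lines $L_j$; hence $\sigma$ is already a finite $\pm 1$ combination of characteristic functions of sectors, and each sector indicator can be algebraically rewritten (using Riesz projections in each variable together with a single $\sgn(\xi+\alpha\xi_1)$) so that Lacey--Thiele applies directly, with no smooth cutoff to absorb. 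Your route is fine, but you can shortcut the ``main obstacle'' you flag simply by noting that $\phi_\ell\,\sgn(\xi+\alpha_{j(\ell)}\xi_1)\in\M_{\Gamma_{\alpha_{j(\ell)}}}(\R^2)$ and invoking Theorem~\ref{general} (the $n=2$, $\dim\Gamma=1$ case), which already covers smooth Mikhlin truncations of the bilinear Hilbert transform symbol.
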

It is a very simple exercise to show that this theorem follows from the $k=1$ case studied in \cite{lt1} and \cite{lt2} \footnote{Indeed, if $n=2$, one can first observe that the symbol \eqref{psymbol} is constant on
various {\it angular regions} centered at the origin, and so it is enough to understand bilinear operators whose symbols are the characteristic functions of such {\it angular sets}. 
But (modulo some natural compositions with certain Riesz projections) the study of these  
can be easily reduced to the study of bilinear operators of type $2 HT_{\alpha}$. The details are left to the reader.}.
See also \cite{gn} for some related results. In other words, for bilinear operators, the $k\geq 1$ situation is as complex as the original $k=1$ case.

One can then naturally ask if there is an $n$-linear generalization of the above result. This may of course seem hopeless, given the remarks made above. 
However, we will prove in this paper, that if $k\geq 2$ and $n\geq 4$, the most natural $n$-linear generalization of the above bilinear theorem, is false. More precisely, we will prove

\begin{theorem}\label{teorema}
Let $k\geq 2$ and $n\geq 4$. Then, for any generic vectors $\vec{\alpha_1}, ..., \vec{\alpha_k}\in\R^{n-1}$, the $n$-linear operator $n HT_{\vec{\alpha_1}, ..., \vec{\alpha_k} }$ 
does not satisfy {\it any} $L^p$ estimates of H\"{o}lder type.
\end{theorem}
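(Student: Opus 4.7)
\medskip
\noindent\textbf{Proof sketch.} The plan is to exhibit, for every tuple of putative H\"{o}lder exponents $(p_1,\ldots,p_n,p)$, a family of Schwartz inputs on which the operator norm blows up while the input norms stay bounded. I would proceed in three steps.

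\emph{Reduction to $k=2$.} Denote the linear forms appearing in \eqref{psymbol} by $L_1,\ldots,L_k$. The product symbol factors as $\sgn(L_1)\sgn(L_2)\prod_{j\geq 3}\sgn(L_j)$. By choosing the Fourier supports of the inputs so that $(\xi,\xi_1,\ldots,\xi_{n-1})$ lies in a fixed open chamber of the hyperplane arrangement $\{L_j=0\}_{j\geq 3}$, the extra factors collapse to a constant $\pm 1$ on the joint support. Genericity of the $\vec{\alpha_j}$ guarantees that such chambers are open cones of positive measure, large enough to host the wavepacket construction below. Thus it suffices to handle $k=2$.

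\emph{Exposing a bi-parameter singularity.} For generic $\vec{\alpha_1},\vec{\alpha_2}\in\R^{n-1}$ with $n\geq 4$, perform a linear change of frequency variables $(\xi,\xi_1,\ldots,\xi_{n-1})\mapsto(\eta_1,\eta_2,\zeta_1,\ldots,\zeta_{n-2})$ so that $L_1=\eta_1$, $L_2=\eta_2$, and the output frequency $\xi+\xi_1+\cdots+\xi_{n-1}$ becomes a generic linear combination of all new coordinates. The symbol becomes $\sgn(\eta_1)\sgn(\eta_2)$, so the multilinear form is driven by a two-parameter Calder\'{o}n--Zygmund type singularity at the origin of the $(\eta_1,\eta_2)$-plane. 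The hypothesis $n\geq 4$ is used crucially here to guarantee at least two genuine ``dummy'' coordinates $\zeta_1,\zeta_2$; without them the bi-parameter picture collapses into a one-dimensional problem, consistent with the positive bilinear result recalled after Theorem 1.1.

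\emph{Bi-parameter wavepacket counterexample.} Build inputs $f^{(M)},f_1^{(M)},\ldots,f_{n-1}^{(M)}$ as Schwartz wavepackets in phase space whose one-dimensional Fourier supports, when mapped into the new coordinates, lie on generic lines whose collective projection probes the singularity of $\sgn(\eta_1)\sgn(\eta_2)$ along two independent rays approaching the origin at vastly different scales. A direct computation of the multilinear form then decomposes it into a sum over the four orthants of the $(\eta_1,\eta_2)$-plane; generic position of the change of variables prevents cancellation between these orthants, yielding a lower bound that grows like a positive power of $\log M$. Meanwhile every $\|f_j^{(M)}\|_{L^{p_j}}$ stays uniformly bounded in $M$, contradicting any H\"{o}lder-type estimate for the entire range of exponents.

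\emph{Main obstacle.} The principal difficulty is the last step: constructing explicit wavepackets whose one-dimensional frequency profiles collectively feel the two-dimensional singularity, and then verifying that no cancellation occurs between the four orthants. Ruling out such cancellation is where the genericity of $\vec{\alpha_1},\vec{\alpha_2}$ is most seriously used, and I expect it to require a careful time-frequency decomposition in the spirit of \cite{lt1}, \cite{lt2}, adapted to the two-parameter setting.
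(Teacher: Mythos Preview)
Your reduction in Step~1 is in the right spirit and matches how the paper handles $k'>k$: one localizes the counterexample in frequency, rescales, and then uses modulation invariance along $\Gamma_{\vec{\alpha_1}}\cap\Gamma_{\vec{\alpha_2}}$ to slide the support away from the extra hyperplanes $\{L_j=0\}_{j\geq 3}$. So far so good.

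The genuine gap is Step~3. You have not constructed the inputs, and your description of ``wavepackets probing the singularity along two independent rays'' together with the expectation that the lower bound will require ``a careful time-frequency decomposition in the spirit of \cite{lt1}, \cite{lt2}'' points in the wrong direction. Those papers prove \emph{boundedness}; there is no mechanism in that machinery for manufacturing logarithmic blow-up, and wavepackets with compact Fourier support interacting with a bi-parameter sign symbol do not obviously produce divergence once you remember that each input is a function of a \emph{single} real variable, so the joint Fourier support is constrained to be a product of one-dimensional sets. Your change of variables in Step~2 makes the symbol look like $\sgn(\eta_1)\sgn(\eta_2)$, but it simultaneously destroys the product structure of $\widehat{f}(\xi)\widehat{f_1}(\xi_1)\cdots\widehat{f_{n-1}}(\xi_{n-1})$, so nothing has actually been simplified.

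The paper's construction is entirely different and much more elementary: one works on the \emph{kernel} side, using the representation
\[
nHT_{\vec{\alpha_1},\vec{\alpha_2}}(f,f_1,\ldots,f_{n-1})(x)
= c\int_{\R^2} f(x+t+s)\prod_{j=1}^{n-1} f_j(x+\alpha_{1,j}t+\alpha_{2,j}s)\,\frac{dt}{t}\frac{ds}{s},
\]
and takes the inputs to be \emph{quadratic chirps} $f_j(x)=e^{i\#_j x^2}\widetilde{\chi}_N(x)$. The phase in the double integral becomes a quadratic form in $(x,t,s)$; the condition $n\geq 4$ is exactly what allows one to choose $(\#, \#_1,\ldots,\#_{n-1})$ orthogonal to enough vectors so that the coefficients of $xt$, $xs$, and $s^2$ all vanish. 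What remains is essentially $\int e^{i(\alpha t^2+\beta ts)}\frac{dt}{t}\frac{ds}{s}$, which after integrating in $s$ reduces to $\int_0^\infty \frac{e^{i\alpha t}}{t}\,dt$ and diverges logarithmically. The rigorous version then shows $|nHT_{\vec{\alpha_1},\vec{\alpha_2}}(f,\ldots)(x)|\gtrsim \log N$ on an interval of length $\sim N$, while each $\|f_j\|_{p_j}\lesssim N^{1/p_j}$, killing every H\"older estimate at once. No time-frequency analysis is needed; the entire argument is an explicit oscillatory-integral computation plus a linear-algebra count, and the role of $n\geq 4$ is purely the solvability of that linear system, not the existence of ``dummy coordinates''.
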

This time (and in fact for any $n\geq 3$), {\it the geometry of the symbols} \eqref{psymbol} becomes more complicated and there do not seem to be any direct connections between the $k=1$ and $k\geq 2$ cases. In particular, the case $k=1$ and $n\geq 3$ remains open
\footnote{ However, in this particular case, it has been noticed (first heuristically in \cite{camil1} and then rigorously in \cite{d}) that the trilinear Hilbert transform cannot map
$L^{p_1}\times L^{p_2}\times L^{p_3}$ into $L^p$ for {\it every} $1/3 < p < \infty$. See also \cite{christ} for another interesting {\it tri-linear counterexample}. }.

This brings us naturally to the second class of extensions, that we mentioned earlier. We need to set up some notations first.
For any $\vec{\beta}\in \R^{n-1}$, denote by $\Gamma_{\vec{\beta}}$ the $(n-1)$ dimensional  subspace defined by

$$\Gamma_{\vec{\beta}} := \{ (\xi, \xi_1, ..., \xi_{n-1})\in \R^n : \xi + \beta_1 \xi_1 + ... + \beta_{n-1} \xi_{n-1} = 0 \}.$$
Notice that the symbol \eqref{symbol} of $nHT_{\vec{\alpha}}$ is singular along $\Gamma_{\vec{\alpha}}$ while the symbol \eqref{psymbol} of $n HT_{\vec{\alpha_1}, ..., \vec{\alpha_k} }$ is singular 
along $\Gamma_{\vec{\alpha_1}} \cup ... \cup \Gamma_{\vec{\alpha_k}}$. Then, for any $\Gamma\subseteq \R^n$ subspace of arbitrary dimension, denote by $\M_{\Gamma}(\R^n)$ the class of
Marcinkiewicz-H\"{o}rmander-Mikhlin symbols which are singular along $\Gamma$. More specifically, $\M_{\Gamma}(\R^n)$ contains all bounded functions $m(\vec{\eta})$, which are smooth in the complement of $\Gamma$,
and which satisfy

$$|\partial^{\gamma}m (\vec{\eta})| \lesssim \frac{1}{\dist (\vec{\eta}, \Gamma)^{|\gamma|}}$$
for sufficiently many multi-indices $\gamma$. Observe also that if $\widetilde{\Gamma}\subseteq \Gamma$ one has the inclusion $\M_{\widetilde{\Gamma}}(\R^n)\subseteq \M_{\Gamma}(\R^n)$.
Given any $m\in \M_{\Gamma}(\R^n)$ one then denotes by $T_m$ the $n$-linear operator defined by the same formula \eqref{1}, with the symbol \eqref{symbol} replaced by $m$.

The following general theorem has been proved in \cite{mtt4}. See also \cite{gn} for the particular bilinear case.

\begin{theorem}\label{general}
For any non-degenerate subspace $\Gamma\subseteq \R^n$ and $m\in \M_{\Gamma}(\R^n)$, the multi-linear operator $T_m$ satisfies many $L^p$ estimates of H\"{o}lder type, as long as $\dim (\Gamma) < \frac{n+1}{2}$.
\end{theorem}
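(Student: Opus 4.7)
The plan is to reduce $T_m$, through a sequence of decompositions, to a superposition of discretized multilinear model sums amenable to the multi-dimensional time-frequency analysis developed in the spirit of Lacey--Thiele. First, since $\Gamma$ is a non-degenerate subspace of $\R^n$ of dimension $d = \dim(\Gamma)$, pick a complementary subspace $V$ of dimension $n-d$ and a smooth Whitney-type decomposition of $\R^n \setminus \Gamma$ into cylinders $Q$ comparable to tubes of length scale $2^j$ in the directions of $V$, at distance $\sim 2^j$ from $\Gamma$, and long along $\Gamma$. Writing $m = \sum_{j,Q} m_{j,Q}$, the hypothesis $m\in \M_{\Gamma}(\R^n)$ guarantees that each piece $m_{j,Q}$ is (after rescaling) a uniformly bounded smooth bump on its supporting cylinder.

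Next, expand each $m_{j,Q}$ into a rapidly convergent Fourier series adapted to its supporting cylinder. This decouples the $\Gamma$-variables from the $V$-variables and, after substitution back into the multilinear form defining $T_m$, produces a representation of $T_m$ as an absolutely convergent sum of \emph{model operators}: for each scale $j$ and each cylinder $Q$, a tensor product of wave packets adapted to $P$ (a multi-tile encoding both a spatial cube and a collection of frequency boxes in the $V$-directions), summed against suitable Fourier coefficients with rapidly decaying weights. Symbolically, $T_m$ becomes a finite superposition of sums of the form $\sum_P c_P \bigl(\prod_i \langle f_i, \schwartz_P^i\rangle\bigr) \schwartz_P^0$ over collections of multi-tiles $P$ living in a phase space of dimension roughly $2(n-d)$.

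The final and most delicate step is to establish uniform $L^{p_1}\times\dots\times L^{p_n}\to L^p$ bounds for these model sums. One defines $\size$ and $\energy$ functionals on collections of multi-tiles, carries out a John--Nirenberg-type iterated tree selection, and telescopes the resulting single-tree estimates via Bessel- and BMO-type inequalities. The hypothesis $d<\frac{n+1}{2}$ is precisely what makes this machinery close: the tile selection generates, at each scale, $n-1$ families of ``rank-one'' projections whose joint overlap must be controlled against the dimension $n-d$ of the transverse phase space $V$. The orthogonality estimates required at the heart of the tree lemma hold as soon as $n-d > d-1$, i.e. $d<\frac{n+1}{2}$; above this threshold the projections become too correlated and the tree counting fails. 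I expect the main obstacle to lie precisely here, in verifying the correct size/energy orthogonality at the critical value $d = \lfloor (n-1)/2\rfloor$, since the remaining steps are essentially a bookkeeping adaptation of the bilinear Hilbert transform analysis of \cite{lt1}, \cite{lt2}.
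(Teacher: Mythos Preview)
The paper does not contain a proof of this theorem at all: it is quoted as a known result and attributed to \cite{mtt4} (Muscalu--Tao--Thiele, \emph{Multi-linear operators given by singular multipliers}, J.~Amer.~Math.~Soc.~15 (2001)). So there is no ``paper's own proof'' to compare against; the present paper uses Theorem~\ref{general} only as background to motivate and contrast with its negative results (Theorems~\ref{teorema} and~\ref{mult}).

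That said, your sketch is broadly faithful to the strategy actually carried out in \cite{mtt4}: a Whitney-type decomposition of $\R^n\setminus\Gamma$, Fourier expansion of the pieces to reduce to discretized model sums over multi-tiles, and then size/energy machinery with tree selection in the Lacey--Thiele tradition. Your heuristic for the threshold $d<\frac{n+1}{2}$ is in the right direction but somewhat imprecise: in \cite{mtt4} the constraint enters through a \emph{rank} condition on the projections of $\Gamma$ onto coordinate hyperplanes, which governs how many of the $n+1$ wave-packet families carry genuine oscillation (Carleson-type behavior) versus being essentially lacunary; the condition $d<\frac{n+1}{2}$ ensures enough lacunary factors survive for the almost-orthogonality and tree estimates to close. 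Your phrasing ``$n-d>d-1$'' is numerically the same inequality but the mechanism you describe (``projections become too correlated and tree counting fails'') is vaguer than what actually happens. If you intend to write this up as a self-contained proof you will need to make the rank/non-degeneracy hypotheses explicit and track them through the tree lemma, which is where the real content of \cite{mtt4} lies.
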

Notice that the $2HT_{\alpha}$ case corresponds to $n=2$ and $\dim (\Gamma) = 1$, while $nHT_{\vec{\alpha}}$ would be covered by the case of subspaces $\Gamma$ of maximal dimension $\dim(\Gamma) = n-1$
\footnote{The non-degeneracy of $\Gamma$ is understood in the sense of \cite{mtt4}. Without being too specific, let us just say that generic subspaces {\it are} non-degenerate. For instance, if $n=2$, any line is non-degenerate,
if it is not one of the coordinate axes, nor the one defined by the equation $\xi+\xi_1 =0$.}.

Given all these results described so far, it is natural (and quite tempting) to believe, that if $nHT_{\vec{\alpha}}$ were to satisfy some range of $L^p$ estimates, then these estimates should remain valid for generic vectors $\vec{\alpha}$,
and that at least some of them, should be available for operators of type $T_m$ corresponding to generic symbols $m$ in the class $\M_{\Gamma_{\vec{\alpha}}}(\R^n)$, as well. In other words, that there exists a non-trivial range
of exponents, where Theorem \ref{general} can be extended all the way to the maximal dimension $\dim(\Gamma) = n-1$.

However, it is not difficult to see as a consequence of the previous Theorem \ref{teorema}, that this {\it ideal scenario} cannot be true. We have

\begin{theorem}\label{mult}
For any $1<p_1, ..., p_n \leq \infty$ and $0<p<\infty$ with $1/p_1+ ... +1/p_n = 1/p$,
there exist non-degenerate subspaces $\Gamma \subseteq \R^n$ of maximal dimension $n-1$, and symbols $m \in \M_{\Gamma}(\R^n)$, for
which the associated  $n$-linear multiplier operators $T_m$ do not map $L^{p_1}\times ... \times L^{p_n}$ into $L^p$.
\end{theorem}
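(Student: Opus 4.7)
The approach I would take is to apply Theorem \ref{teorema} with $k=2$ to obtain generic vectors $\vec{\alpha_1}, \vec{\alpha_2} \in \R^{n-1}$ for which the $n$-linear operator $nHT_{\vec{\alpha_1}, \vec{\alpha_2}}$, with symbol
\[
\sigma(\vec{\eta}) := \sgn(\ell_1(\vec{\eta})) \sgn(\ell_2(\vec{\eta})), \qquad \ell_j(\vec{\eta}) := \xi + \vec{\alpha_j} \cdot \vec{\xi},
\]
fails to map $L^{p_1}\times\cdots\times L^{p_n}$ into $L^p$. I would then decompose $\sigma$ via a smooth, degree-zero homogeneous partition of unity $1 = \chi_0 + \chi_1 + \chi_2 + \chi_{12}$ on $\R^n\setminus\{0\}$ adapted to the two singular hyperplanes, where $\chi_0$ is supported away from both $\Gamma_{\vec{\alpha_1}}$ and $\Gamma_{\vec{\alpha_2}}$; each $\chi_j$ ($j=1,2$) is supported in a conic neighborhood of $\Gamma_{\vec{\alpha_j}}$ disjoint from $\Gamma_{\vec{\alpha_{3-j}}}$; and $\chi_{12}$ is supported in a conic neighborhood of the codimension-two subspace $\Gamma_{\vec{\alpha_1}}\cap\Gamma_{\vec{\alpha_2}}$.

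On the support of $\chi_j$ the other sign factor $\sgn(\ell_{3-j})$ is constantly equal to some $c_j \in \{\pm 1\}$, so $\sigma\chi_j = c_j\,\sgn(\ell_j)\chi_j$. Since $\chi_j$ is smooth, degree-zero homogeneous and vanishes in a neighborhood of $\Gamma_{\vec{\alpha_{3-j}}}$, this piece lies in $\M_{\Gamma_{\vec{\alpha_j}}}(\R^n)$. The piece $\sigma\chi_0$ is smooth on $\R^n\setminus\{0\}$, so $T_{\sigma\chi_0}$ is bounded by the classical Marcinkiewicz--H\"{o}rmander--Mikhlin multiplier theorem. Consequently, once $T_{\sigma\chi_{12}}$ is also known to be bounded, the unboundedness of $T_\sigma = T_{\sigma\chi_0}+T_{\sigma\chi_1}+T_{\sigma\chi_2}+T_{\sigma\chi_{12}}$ forces at least one of $T_{\sigma\chi_1}, T_{\sigma\chi_2}$ to be unbounded; taking $m := \sigma\chi_j$ and $\Gamma := \Gamma_{\vec{\alpha_j}}$ for the offending $j \in \{1,2\}$ then furnishes the example required by Theorem \ref{mult}.

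The main obstacle is therefore the bound on the corner piece $T_{\sigma\chi_{12}}$, whose symbol is still singular on both hyperplanes within its support and hence does not lie in $\M_\Gamma(\R^n)$ for any single hyperplane $\Gamma$. My plan for it is a secondary dyadic decomposition in two angular parameters, $|\ell_1|/|\vec{\eta}|\sim 2^{-a}$ and $|\ell_2|/|\vec{\eta}|\sim 2^{-b}$, combined with splitting into the four sign components of $(\ell_1, \ell_2)$. On each resulting cell both sign functions are locally constant, so the piece $\sigma\chi^{\epsilon_1,\epsilon_2}_{a,b}$ is (up to a sign) a \emph{smooth} Fourier multiplier supported on a conic shell on $S^{n-1}$ of angular measure $\sim 2^{-a-b}$. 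The goal would be a single-cell estimate sharp enough that the sum over $(a,b)$ converges, balancing the Mikhlin derivative growth of the cutoff (of order $2^{\max(a,b)}$) against the shell's small angular support via adapted Littlewood--Paley or wave-packet type inequalities. For $n = 4$ the codimension-two intersection has $\dim(\Gamma_{\vec{\alpha_1}}\cap\Gamma_{\vec{\alpha_2}}) = 2 < (n+1)/2$, and one expects Theorem \ref{general} to ease this bookkeeping; for $n \geq 5$ the balance is tighter and likely requires an iterated version of the same partition-of-unity scheme. This summation step is the technical heart of the reduction from Theorem \ref{teorema} to Theorem \ref{mult}.
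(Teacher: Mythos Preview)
Your starting point --- invoke Theorem~\ref{teorema} with $k=2$ and split the product symbol $\sigma=\sgn(\ell_1)\sgn(\ell_2)$ --- is exactly the paper's. The difference is that the paper splits into only \emph{two} pieces, not four, and thereby never isolates a corner term $\sigma\chi_{12}$ at all. Concretely, take a smooth even $\phi:\R\to[0,1]$ with $\phi\equiv 0$ on $[-\tfrac12,\tfrac12]$ and $\phi\equiv 1$ outside $[-1,1]$, and set
\[
m_1:=\sigma\cdot\phi\!\left(\frac{\ell_2}{\ell_1}\right),\qquad m_2:=\sigma\cdot\Bigl(1-\phi\!\left(\frac{\ell_2}{\ell_1}\right)\Bigr).
\]
One checks directly that $m_1\in\M_{\Gamma_{\vec{\alpha_1}}}(\R^n)$ and $m_2\in\M_{\Gamma_{\vec{\alpha_2}}}(\R^n)$: away from $\Gamma_{\vec{\alpha_1}}$ the factor $\sgn(\ell_1)$ is locally constant and $m_1$ vanishes identically near $\Gamma_{\vec{\alpha_2}}\setminus\Gamma_{\vec{\alpha_1}}$, while on the support of $\phi'(\ell_2/\ell_1)$ one has $|\ell_1|\sim|\ell_2|$, so every $\eta$-derivative of $\phi(\ell_2/\ell_1)$ is $O(|\ell_1|^{-1})=O(\dist(\vec\eta,\Gamma_{\vec{\alpha_1}})^{-1})$; the argument for $m_2$ is symmetric. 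Since $nHT_{\vec{\alpha_1},\vec{\alpha_2}}=T_{m_1}+T_{m_2}$ is unbounded, one of $T_{m_1},T_{m_2}$ must be, and Theorem~\ref{mult} follows.

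The step you flag as ``the technical heart'' --- bounding $T_{\sigma\chi_{12}}$ via a bi-parameter dyadic decomposition --- is therefore an artificial difficulty created by your choice of partition. The class $\M_{\Gamma}(\R^n)$ already permits derivative blow-up of order $\dist(\cdot,\Gamma)^{-|\gamma|}$, which near $\Gamma_{\vec{\alpha_1}}\cap\Gamma_{\vec{\alpha_2}}$ is generous enough to absorb the corner behaviour into \emph{either} piece; there is no need to carve it out and then fight to put it back. Note also that your proposed fallback for the corner (appealing to Theorem~\ref{general} because $\dim(\Gamma_{\vec{\alpha_1}}\cap\Gamma_{\vec{\alpha_2}})<(n+1)/2$ when $n=4$) does not apply as stated: $\sigma\chi_{12}$ is singular along both full hyperplanes within its support, not merely along their intersection, so it does not lie in $\M_{\Gamma_{\vec{\alpha_1}}\cap\Gamma_{\vec{\alpha_2}}}(\R^n)$. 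Replace your four-piece partition by the ratio cutoff above and the proof collapses to the two-line observation the paper gives.
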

To prove Theorem \ref{mult} one just has to observe that any $nHT_{\vec{\alpha_1}, \vec{\alpha_2}}$ splits quite naturally as

$$nHT_{\vec{\alpha_1}, \vec{\alpha_2}} = T_{m_1} + T_{m_2}$$
where $m_1\in \M_{\Gamma_{\vec{\alpha_1}}}(\R^n)$ and $m_2\in \M_{\Gamma_{\vec{\alpha_2}}}(\R^n)$. Since for generic vectors $\vec{\alpha_1}$ and $\vec{\alpha_2}$ the $n$-linear operator $nHT_{\vec{\alpha_1}, \vec{\alpha_2}}$
does not satisfy any $L^p$ estimates (cf. Theorem \ref{teorema}), it is clearly impossible for both $T_{m_1}$ and $T_{m_2}$ to map $L^{p_1}\times ... \times L^{p_n}$ into $L^p$.

Coming now back to Theorem \ref{teorema}, its proof will follow from the following weaker, but also more precise, result.

\begin{proposition}\label{precis}
For any $k\geq 2$ there exists a positive integer $N(k)$ such that for any $n\geq N(k)$ and generic vectors $\vec{\alpha_1}, ..., \vec{\alpha_k}\in\R^{n-1}$, the $n$-linear operator $n HT_{\vec{\alpha_1}, ..., \vec{\alpha_k} }$ 
does not satisfy {\it any} $L^p$ estimates of H\"{o}lder type. Moreover, the explicit counterexamples that will be constructed, are {\rm irreducible} in a certain natural sense.
\end{proposition}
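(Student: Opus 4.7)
The plan is to construct explicit test-function counterexamples that exploit the large modulation-invariance group of the product symbol. For generic $\vec{\alpha_1},\dots,\vec{\alpha_k}$, the intersection $V:=\bigcap_{j=1}^k \Gamma_{\vec{\alpha_j}}\subseteq\R^n$ is a linear subspace of dimension $n-k$, and the symbol $\prod_{j=1}^k\sgn(\xi+\vec{\alpha_j}\cdot\vec{\xi})$ is constant on every affine translate of $V$ in the complement of the singular set. When $n-k$ is large, this symmetry has enough room to parametrize families of Fourier configurations that are incompatible with \emph{any} estimate of the form $L^{p_1}\times\cdots\times L^{p_n}\to L^p$ subject to the Hölder constraint $1/p_1+\cdots+1/p_n=1/p$.

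First, I would set up the geometric picture: diagonalize frequency coordinates with respect to $V$, then select a lattice of modulation directions $\vec{v}^{(1)},\dots,\vec{v}^{(M)}\in V$ in generic position, meaning no incidental hits on any $\Gamma_{\vec{\alpha_j}}$ and no unexpected coincidences among their coordinate projections. To each $\vec{v}^{(r)}$ I attach a family of wave packets
$$\schwartz_{\ell,r}(x):=\schwartz(x-s_{\ell,r})\,e^{2\pi i v^{(r)}_\ell x}$$
for a fixed Schwartz bump $\schwartz$ and carefully chosen spatial shifts $s_{\ell,r}$; the test functions are then $f_\ell:=\sum_{r=1}^M \varepsilon_{\ell,r}\schwartz_{\ell,r}$ with independent Rademacher signs $\varepsilon_{\ell,r}$.

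The core step is a lower bound for $\|\,nHT_{\vec{\alpha_1},\dots,\vec{\alpha_k}}(f_0,\dots,f_{n-1})\|_{L^p}$. Expanding multilinearly produces $M^n$ terms; along the diagonal $r_0=\cdots=r_{n-1}=r$, the modulation vector $\vec{v}^{(r)}$ lies in $V$, so each sign factor is unaffected and the resulting contribution reduces, up to a unimodular phase, to a translate of $\schwartz^n$. A Khinchine averaging over the $\varepsilon_{\ell,r}$ then forces the $L^p$ norm of the output to be bounded below by $cM^\alpha$, where $\alpha$ grows with $n-k$ after accounting for the spatial decorrelation of the translates $s_{\ell,r}$. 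The Hölder side, by contrast, is controlled by $\prod_\ell\|f_\ell\|_{L^{p_\ell}}\lesssim M^{1/p}$. Once $n$ exceeds a threshold $N(k)$, one has $\alpha>1/p$ for every admissible Hölder tuple simultaneously, so no such bound can hold.

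The last step is to verify irreducibility: by selecting the $\vec{v}^{(r)}$ so that their coordinate projections remain linearly independent in a quantitative sense across all $n$ slots, no proper subcollection of the $f_\ell$ already witnesses the failure, so the construction genuinely uses the full $n$-linear structure rather than factoring through a lower-dimensional obstruction. The main obstacle I anticipate is controlling the off-diagonal contributions, i.e.\ terms with $(r_0,\dots,r_{n-1})$ not all equal, which could in principle conspire to cancel the diagonal. Ruling this out is precisely where the generic position of the $\vec{v}^{(r)}$ in $V$, combined with a stationary-phase/orthogonality argument on the frequency side, is used --- and it is the amount of room required by this argument that ultimately dictates how large $n$ must be relative to $k$.
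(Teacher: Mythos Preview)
Your proposal has a genuine gap: the mechanism you describe cannot produce unboundedness, because it relies only on the \emph{linear} modulation invariance of the symbol along $V=\bigcap_j\Gamma_{\vec{\alpha_j}}$, and that invariance is shared by the pointwise product $(f_0,\dots,f_{n-1})\mapsto f_0\cdots f_{n-1}$, which \emph{is} bounded by H\"older. Concretely: your diagonal term $nHT(\phi_{0,r},\dots,\phi_{n-1,r})$ is, after stripping the modulation $\vec v^{(r)}\in V$, just a fixed Schwartz function (a translate of $nHT(\phi,\dots,\phi)$, not of $\phi^n$, but no matter). If the spatial shifts $s_{\ell,r}$ make these outputs essentially disjoint, Khinchine gives $\|{\rm output}\|_p\sim M^{1/p}$; and by the same token $\|f_\ell\|_{p_\ell}\sim M^{1/p_\ell}$, so $\prod_\ell\|f_\ell\|_{p_\ell}\sim M^{1/p}$ as well. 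The two sides scale identically for every H\"older tuple, and no choice of $M$ yields a contradiction. Your asserted exponent $\alpha$ that ``grows with $n-k$'' is not there; the off-diagonal terms can only make things worse for you, not better.

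The paper's construction is quite different and hinges on \emph{nonlinear} phases. One takes $f_j(x)=\widetilde\chi_N(x)\,e^{i\#_j x^k}$ (degree-$k$ chirps truncated to $[-N,N]$) and uses the kernel representation
\[
nHT_{\vec{\alpha_1},\dots,\vec{\alpha_k}}(f,f_1,\dots,f_{n-1})(x)\ \sim\ \int_{\R^k}\exp\Bigl(i\,P(x,t_1,\dots,t_k)\Bigr)\,\frac{dt_1}{t_1}\cdots\frac{dt_k}{t_k},
\]
where $P$ is the homogeneous degree-$k$ polynomial $\#(x+t_1+\cdots+t_k)^k+\sum_j\#_j(x+\alpha_{1j}t_1+\cdots+\alpha_{kj}t_k)^k$. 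This polynomial has $\binom{2k}{k}$ monomials; one chooses $(\#,\#_1,\dots,\#_{n-1})$ orthogonal to $\binom{2k}{k}-2$ coefficient vectors so that only the monomials $x^k$ and $t_1\cdots t_k$ survive. This is where $N(k)=\binom{2k}{k}-1$ comes from. The integral then collapses to $\int e^{it_1\cdots t_k}/(t_1\cdots t_k)\,dt$, which after truncation is shown to be $\gtrsim(\log N)^{k-1}$ uniformly for $x$ in an interval of length $\sim N$, while all inputs have $L^{p_j}$ norms $\sim N^{1/p_j}$. The logarithmic gain over the H\"older product $N^{1/p}$ kills every estimate at once. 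The point is that degree-$k$ phases are \emph{not} in the symmetry group of the operator, and their interaction with the $k$-fold kernel $\prod_j 1/t_j$ is exactly what manufactures the divergence; linear modulations, being symmetries, can never do this.
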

The preciseness of the proposition lies on this {\it irreducibility} property of the counterexamples. We will see later on, that given any such irreducible counterexample for $n HT_{\vec{\alpha_1}, ..., \vec{\alpha_k} }$,
it can be naturally localized, rescaled and translated in frequency, so that it automatically becomes a counterexample (which we call {\it reducible} this time) to the boundedness of any $n HT_{\vec{\alpha_1}, ..., \vec{\alpha_{k'}} }$ as long as $k'\geq k$ 
\footnote{ We thank Christoph Thiele for pointing out to us this observation.}. The constants $N(k)$ above will be quite explicit as we will prove the proposition for $N(k) = \frac{ (2k)!}{(k!)^2} - 1$.
Notice that when $k=2$, the expression $\frac{ (2k)!}{(k!)^2} - 1$ is equal to $5$, but we will remark later on that Proposition \ref{precis} remains valid even for $n=4$. In particular, Theorem \ref{teorema} will follow
from the case $k=2$ and $n\geq 4$ since as we mentioned, the {\it irreducible} counterexamples for $nHT_{\vec{\alpha_1}, \vec{\alpha_2}}$ can be transformed into {\it reducible} ones for any $n HT_{\vec{\alpha_1}, ..., \vec{\alpha_k} }$
when $k\geq 2$. Let us also remark that when both $k$ and $n$ are sufficiently large and $n\geq N(k)$, one obtains quite a few distinct classes of counterexamples for the generic $n HT_{\vec{\alpha_1}, ..., \vec{\alpha_k} }$, since besides the
irreducible ones that will be constructed explicitly, there will be various reducible counterexamples coming from the operators having a lower complexity.

It is also interesting to compare all of these negative results
with the positive ones in \cite{mtt1}, \cite{mtt2} and \cite{mtt3}.

The rest of the paper is essentially devoted to the proof of Proposition \ref{precis}. The method we use is a generalization of the arguments from \cite{mtt} and \cite{mptt}. See also \cite{fefferman}, for some somewhat related ideas.

{\bf Acknowledgement:} We wish to thank Christoph Thiele for various comments on a preliminary draft of the manuscript. The present work has been partially supported by the NSF.

\section{Some heuristical arguments}\label{s2}

Before starting the actual proof, we would like to describe a {\it heuristical proof} of Proposition \ref{precis} which will motivate the rigorous argument that will be presented afterwards.

First of all, let us observe that $n HT_{\vec{\alpha_1}, ..., \vec{\alpha_k} }(f, f_1, ..., f_{n-1})(x)$ admits the alternative {\it kernel representation}

\begin{equation}\label{kernel}
\frac{(-1)^k}{(i \pi)^k} \cdot p.v. \int_{\R^k} f(x+t_1 + ... + t_k)
\prod_{j=1}^{n-1} f_j(x + \alpha_{1\,j}t_1 + ... + \alpha_{k\,j}t_k) 
\frac{d t_1}{t_1} ... \frac{d t_k}{t_k}.
\end{equation}
This is a simple consequence of the well known identity $ i \pi \,\sgn(\xi) = \widehat{\frac{1}{t}}(\xi)$ applied $k$ times to the symbol \eqref{psymbol}. Consider now 
$f(x) = e^{ i \# x^k}$ and
$f_j(x) = e^{ i \#_j x^k}$ for $j=1, ..., n-1$ where $\#, \#_1, ..., \#_{n-1}$ are real numbers that will be determined later on.

If one plugs in these functions into the formula \eqref{kernel}, one formally obtains 

$$p.v. \int_{\R^k} 
e^{ i ( \#(x+t_1+ ... +t_k)^k + \sum_{j=1}^{n-1} \#_j (x+\alpha_{1\,j}t_1 + ... + \alpha_{k\,j}t_k)^k )}\frac{d t_1}{t_1} ... \frac{d t_k}{t_k} .
$$
The new expression 

\begin{equation}\label{polinom}
\#(x+t_1+ ... +t_k)^k + \sum_{j=1}^{n-1} \#_j (x+\alpha_{1\,j}t_1 + ... + \alpha_{k\,j}t_k)^k
\end{equation}
should be interpreted as a polynomial in the $k+1$ variables $x, t_1, ..., t_k$ which is homogeneous of degree $k$. An elementary combinatorial computation shows that
this expression has precisely $\frac{ (2k)!}{(k!)^2}$ monomials. For reasons that will be clearer a bit later, we would like to choose our numbers $\#, \#_1, ..., \#_{n-1}$
in such a way that all the coefficients of these monomials are zero with the exception of the ones corresponding to $x^k$ and $t_1\cdot ... \cdot t_k$. Let us have a look at the coefficient of $t_1^k$ for instance. It is given by

$$\# + \sum_{j=1}^{n-1} \#_j \alpha_{1\,j}^k$$
and so the fact that it is zero is equivalent to the fact that the $n$-dimensional vector $(\#, \#_1, ..., \#_{n-1})$ is orthogonal to $(1, \alpha_{1\,1}^k, ..., \alpha_{1\,n-1}^k)$. Since one can argue in a similar way for all the other monomials,
our wish becomes equivalent to the fact that $(\#, \#_1, ..., \#_{n-1})$ is orthogonal to $\frac{ (2k)!}{(k!)^2}-2$ other vectors in $\R^n$. Since $\vec{\alpha_1}, ..., \vec{\alpha_k}$ are generic, all these vectors will be
linearly independent and the fact that such a vector exists is guaranteed by the condition $n\geq \frac{ (2k)!}{(k!)^2} - 1$ stated in Proposition \ref{precis}.
Furthermore, by a proper dilation, one can also assume that the coefficient of $t_1\cdot ... \cdot t_k$ will be equal to $1$.

In particular, for any such a vector $(\#, \#_1, ..., \#_{n-1})$ one can write

\begin{equation}\label{alta}
|   n HT_{\vec{\alpha_1}, ..., \vec{\alpha_k} }   (f, f_1, ..., f_{n-1})(x)| = \frac{1}{\pi^k} \cdot  | \int_{\R^k}\frac{e^{ i  t_1\cdot ...\cdot t_k}}{t_1\cdot ...\cdot t_k} d t_1 ... dt_k |.
\end{equation}
On the other hand, the right hand side of \eqref{alta} can be further calculated as

$$
| \int_{\R^k}\frac{e^{ i  t_1\cdot ...\cdot t_k}}{t_1\cdot ...\cdot t_k} d t_1 ... dt_k | = \pi \int_{\R^{k-1}} \frac{\sgn (t_1\cdot ... \cdot t_{k-1})}{t_1 \cdot ... \cdot t_{k-1}} d t_1 ... dt_{k-1} 
= \pi\, (\int_{\R}\frac{1}{|t|} d t )^{k-1}
$$
and this means that formally, we have obtained the identity

\begin{equation}\label{alta1}
|  n HT_{\vec{\alpha_1}, ..., \vec{\alpha_k} } (f, f_1, ..., f_{n-1})(x)| = \frac{1}{\pi^{k-1}} \cdot  |f(x) f_1(x) ... f_{n-1}(x)|\cdot (\int_{\R}\frac{1}{|t|} d t )^{k-1}.
\end{equation}
Notice that while the moduli of the initial functions are all equal to $1$, the right hand side of \eqref{alta1} is infinite.
The idea now is to restrict all the functions above to an interval of type $[-N, N]$ and to observe that as long as $x$ belongs to an interval of the same size, 
one has 

\begin{equation}\label{alta2}
|    n HT_{\vec{\alpha_1}, ..., \vec{\alpha_k} }    (f, f_1, ..., f_{n-1})(x)| \geq  c (\log N)^{k-1}
\end{equation}
as $N \rightarrow \infty$. Clearly, \eqref{alta2} would imply Proposition \ref{precis} and from now on the goal is to describe a rigorous proof it
\footnote{The functions $f, f_1, ..., f_{n-1}$ which appeared in \eqref{alta2} are the old ones restricted smoothly to an interval of type $[-N, N]$.}.

\section{Proof of Proposition \ref{precis}}\label{s3}

Fix $\vec{\alpha_1}, ..., \vec{\alpha_k}$ generic vectors in $\R^{n-1}$. For $N$ large enough, define the function $\chi_N(x)$ to be the characteristic
function of the interval $[-N, N]$ and $\widetilde{\chi}_N(x)$ to be a smooth function supported on $[-N-\epsilon, N+\epsilon]$ and equal to $1$ on $[-N, N]$, where $\epsilon > 0$
is a number much smaller than $1/N^{k-1}$.

Consider real numbers $\#, \#_1, ..., \#_{n-1}$ chosen to satisfy all the requirements of Section \ref{s2}. Define the functions $f, f_1, ..., f_{n-1}$ by

$$f(x) = \widetilde{\chi}_N(x) e^{i\# x^k}$$
and 

$$f_j(x) = \widetilde{\chi}_N(x) e^{i\#_j x^k}$$
for $1\leq j\leq n-1$. We claim that there exist small constants $c$ and $\widetilde{c}$ depending on all these parameters with the exception of $N$ so that

\begin{equation}\label{1*}
|    n HT_{\vec{\alpha_1}, ..., \vec{\alpha_k} }    (f, f_1, ..., f_{n-1})(x)| \geq  c (\log N)^{k-1}
\end{equation}
as long as $x\in [-\widetilde{c} N, \widetilde{c} N]$. Clearly, as we pointed out earlier, \eqref{1*} would immediately imply Proposition \ref{precis}, since it holds for arbitrarily large $N$.

To prove the claim let us first observe that since $f, f_1, ..., f_{n-1}$ are smooth and compactly supported, formula \eqref{kernel} can be applied and one has 

$$
n HT_{\vec{\alpha_1}, ..., \vec{\alpha_k} }    (f, f_1, ..., f_{n-1})(x) =
$$

\begin{equation}\label{2*}
e^{i (\# + \#_1 + ... + \#_{n-1})x^k}
\frac{(-1)^k}{(i \pi)^k} \int_{\R^k}  \widetilde{\chi}_N  (x+t_1 + ... + t_k)
\prod_{j=1}^{n-1}  \widetilde{\chi}_N   (x + \alpha_{1\,j}t_1 + ... + \alpha_{k\,j}t_k) 
\frac{e^{i t_1\cdot ... \cdot t_k} }{t_1\cdot  ... \cdot t_k} d t_1 ... d t_k.
\end{equation}
By splitting $e^{i t_1 \cdot ...\cdot t_k}$ as

$$e^{i t_1 \cdot ...\cdot t_k} = \cos (t_1 \cdot ... \cdot t_k) + i \sin (t_1\cdot ... \cdot t_k)$$
and ignoring the harmless factor $e^{i (\# + \#_1 + ... + \#_{n-1})x^k}$,  one can decompose the rest of \eqref{2*} as

\begin{equation}\label{3*a}
\frac{(-1)^k}{(i \pi)^k} \int_{\R^k}  \widetilde{\chi}_N  (x+t_1 + ... + t_k)
\prod_{j=1}^{n-1}  \widetilde{\chi}_N   (x + \alpha_{1\,j}t_1 + ... + \alpha_{k\,j}t_k) 
\frac{\cos ( t_1\cdot ... \cdot t_k) }{t_1\cdot  ... \cdot t_k} d t_1 ... d t_k
\end{equation}

$$ + $$

\begin{equation}\label{3*b}
i \, \frac{(-1)^k}{(i \pi)^k}  \int_{\R^k}  \widetilde{\chi}_N  (x+t_1 + ... + t_k)
\prod_{j=1}^{n-1}  \widetilde{\chi}_N   (x + \alpha_{1\,j}t_1 + ... + \alpha_{k\,j}t_k) 
\frac{\sin ( t_1\cdot ... \cdot t_k) }{t_1\cdot  ... \cdot t_k} d t_1 ... d t_k.
\end{equation}

It is a good moment now to pause and make a few important remarks regarding the supports of our integrands in \eqref{3*a} and \eqref{3*b}. Consider a generic set of the form

\begin{equation}\label{set}
\{ (t_1, ..., t_k) \in \R^k : a \leq \beta_1 t_1 + ... + \beta_k t_k \leq b \}
\end{equation}
with $a < 0 < b$ and $\vec{\beta} = (\beta_1, ..., \beta_k)$ arbitrary. Clearly, this set is a {\it $k$ dimensional strip}, containing the origin and lying between the hyperspaces
$\beta_1 t_1 + ... + \beta_k t_k = a$ and $\beta_1 t_1 + ... + \beta_k t_k = b$ which are both perpendicular to the given vector $\vec{\beta}$. Moreover, the {\it width} of this
strip is $O (b-a)$.

As a consequence, the support of the integrands in \eqref{3*a} and \eqref{3*b} lies at the intersection of $n$ such $k$ dimensional strips. Since the vectors 
$\vec{\alpha_1}, ..., \vec{\alpha_k}$ are {\it generic}, if one picks $\widetilde{c}$ small enough and $x\in [-\widetilde{c} N, \widetilde{c} N]$ this intersection will be a bounded domain in $\R^k$ containing the
origin and also contained in a large cube of sidelength $O(N)$. Hence, the term \eqref{3*b} is well defined and this means that the term \eqref{3*a} is well defined as well (being the difference of two
well defined expressions).

In particular, from \eqref{2*}, \eqref{3*a} and \eqref{3*b} one can see that

$$
| n HT_{\vec{\alpha_1}, ..., \vec{\alpha_k} }    (f, f_1, ..., f_{n-1})(x) |
$$

\begin{equation}\label{4*}
\geq \frac{1}{\pi^k}
| \int_{\R^k}  \widetilde{\chi}_N  (x+t_1 + ... + t_k)
\prod_{j=1}^{n-1}  \widetilde{\chi}_N   (x + \alpha_{1\,j}t_1 + ... + \alpha_{k\,j}t_k) 
\frac{\sin ( t_1\cdot ... \cdot t_k) }{t_1\cdot  ... \cdot t_k} d t_1 ... d t_k         |
\end{equation}
for every $x\in [-\widetilde{c} N, \widetilde{c} N]$.

Next, we would like to observe that modulo some harmless {\it error terms}, one can replace all the $\widetilde{\chi}_N$ functions in \eqref{4*} by the
corresponding $\chi_N$. To see this, let us denote the $n$ linear inner expression in \eqref{4*} by $\E(\widetilde{\chi}_N, \widetilde{\chi}_N, ..., \widetilde{\chi}_N)(x)$.
One can write

$$\E(\widetilde{\chi}_N, \widetilde{\chi}_N, ..., \widetilde{\chi}_N)(x)$$

$$ = \E(\chi_N, \widetilde{\chi}_N, ..., \widetilde{\chi}_N)(x) + \E(\widetilde{\chi}_N - \chi_N, \widetilde{\chi}_N, ..., \widetilde{\chi}_N)(x)$$
and it is not difficult to see that the absolute value of the error term $\E(\widetilde{\chi}_N - \chi_N, \widetilde{\chi}_N, ..., \widetilde{\chi}_N)(x)$ is at most $O(1)$, as a consequence
of the fact that the function $\frac{\sin x}{x}$ is bounded and that $\widetilde{\chi}_N - \chi_N$ is supported on a union of two strips of width $O(1/N^{k-1})$. Iterating this argument $n$ times
one obtains from \eqref{4*} that 

$$
| n HT_{\vec{\alpha_1}, ..., \vec{\alpha_k} }    (f, f_1, ..., f_{n-1})(x) |
$$

\begin{equation}\label{5*}
\geq \frac{1}{\pi^k}
| \int_{\R^k}  \chi_N  (x+t_1 + ... + t_k)
\prod_{j=1}^{n-1}  \chi_N   (x + \alpha_{1\,j}t_1 + ... + \alpha_{k\,j}t_k) 
\frac{\sin ( t_1\cdot ... \cdot t_k) }{t_1\cdot  ... \cdot t_k} d t_1 ... d t_k         | - O(1) 
\end{equation}
for every $x\in [-\widetilde{c} N, \widetilde{c} N]$.

Clearly, the inner term on the right hand side of \eqref{5*} can be written as

\begin{equation}\label{6*}
\int_{D_x} \frac{\sin ( t_1\cdot ... \cdot t_k) }{t_1\cdot  ... \cdot t_k} d t_1 ... d t_k
\end{equation}
where $D_x$ is a compact and convex domain in $\R^k$ containing the origin and having also the property that

$$[-c_1 N, c_1 N]^k \subseteq D_x \subseteq [-C_1 N, C_1 N]^k$$
where $c_1$ is small and $C_1$ is large and they depend on $\vec{\alpha_1}, ..., \vec{\alpha_k}$ but are otherwise independent on $x\in [-\widetilde{c} N, \widetilde{c} N]$.

Split now \eqref{6*} as

$$
\int_{D_x} \frac{\sin ( t_1\cdot ... \cdot t_k) }{t_1\cdot  ... \cdot t_k} d t_1 ... d t_k
$$

\begin{equation}\label{7*}
= 
\int_{[-c_1 N, c_1 N]^k  } \frac{\sin ( t_1\cdot ... \cdot t_k) }{t_1\cdot  ... \cdot t_k} d t_1 ... d t_k +
\int_{D_x \setminus [-c_1 N, c_1 N]^k } \frac{\sin ( t_1\cdot ... \cdot t_k) }{t_1\cdot  ... \cdot t_k} d t_1 ... d t_k.
\end{equation}
We will prove in the next two sections that the the first term in \eqref{7*} is positive and is {\it bounded from below} by $c (\log N)^{k-1}$ while the absolute value of the second term in \eqref{7*} is {\it bounded from above}
by $C (\log N )^{k-2}$.

Combining these two facts with the previous \eqref{5*} will imply the desired \eqref{1*}.

\section{Lower logarithmic bounds}

In this section we prove the lower logarithmical bounds for the first term in \eqref{7*} that have been mentioned at the end of the previous Section \ref{s3}.

\begin{proposition}\label{below}
For any integer $k\geq 1$ there exists a constant $c ( = c(k))$ with the property that

\begin{equation}\label{8*}
\int_{-N}^N ... \int_{-N}^N  \frac{\sin ( t_1\cdot ... \cdot t_k) }{t_1\cdot  ... \cdot t_k} d t_1 ... d t_k \geq c (\log N)^{k-1}
\end{equation}
as long as $N$ is large enough.
\end{proposition}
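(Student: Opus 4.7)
The plan is to reduce the $k$-dimensional integral to a one-dimensional one by Fubini, exploiting the even symmetry in each variable and the well-known properties of the sine integral $\mathrm{Si}(x) = \int_0^x \frac{\sin u}{u}\,du$.

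First I would observe that the integrand $\frac{\sin(t_1\cdots t_k)}{t_1\cdots t_k}$ is even in each $t_j$ separately (flipping the sign of any one variable flips both numerator and denominator). Hence the left-hand side of \eqref{8*} equals
\[
2^k \int_0^N \cdots \int_0^N \frac{\sin(t_1\cdots t_k)}{t_1\cdots t_k}\,dt_1\cdots dt_k.
\]
Next, I would fix $t_2,\dots,t_k>0$ and perform the $t_1$-integration via the substitution $u = t_1 t_2 \cdots t_k$, which yields
\[
\int_0^N \frac{\sin(t_1\cdots t_k)}{t_1\cdots t_k}\,dt_1 \;=\; \frac{\mathrm{Si}(N t_2 \cdots t_k)}{t_2 \cdots t_k}.
\]
So the original integral equals $2^k\int_{[0,N]^{k-1}} \frac{\mathrm{Si}(Nt_2\cdots t_k)}{t_2\cdots t_k}\,dt_2\cdots dt_k$.

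The key fact I would then invoke is that $\mathrm{Si}(x)>0$ for all $x>0$ and $\mathrm{Si}(x)\to \pi/2$ as $x\to\infty$. Positivity follows from the standard identity $\mathrm{Si}(x)=\pi/2 - \int_x^\infty \frac{\sin u}{u}\,du$ and the integration-by-parts estimate $\left|\int_x^\infty \frac{\sin u}{u}\,du\right|\leq 2/x$, combined with direct inspection on $[0,2\pi]$. Because the integrand is nonnegative, I can discard the portion of $[0,N]^{k-1}$ outside the box $[1,N]^{k-1}$ and still obtain a lower bound. On $[1,N]^{k-1}$ we have $Nt_2\cdots t_k \geq N$, so for $N$ large enough $\mathrm{Si}(Nt_2\cdots t_k)\geq \pi/4$. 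Therefore
\[
\int_{[0,N]^{k-1}} \frac{\mathrm{Si}(Nt_2\cdots t_k)}{t_2\cdots t_k}\,dt_2\cdots dt_k \;\geq\; \frac{\pi}{4}\int_1^N\!\!\cdots\!\int_1^N \frac{dt_2\cdots dt_k}{t_2\cdots t_k} \;=\; \frac{\pi}{4}(\log N)^{k-1},
\]
which gives \eqref{8*} with $c=c(k)=2^{k-2}\pi$.

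The main conceptual step is the use of Fubini plus the substitution to collapse one of the variables into $\mathrm{Si}$; after that, the nonnegativity of $\mathrm{Si}$ on $(0,\infty)$ is what allows the otherwise delicate oscillatory integral to be bounded below by simply restricting to a product of logarithmic intervals. I do not expect any real obstacle here, since this is a clean computation; the only point deserving care is justifying positivity of $\mathrm{Si}$ uniformly on $[1,\infty)$, which is a classical estimate.
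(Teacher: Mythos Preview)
Your proof is correct. Both you and the paper begin with the same even-symmetry reduction to $[0,N]^k$ and the same first substitution collapsing the $t_1$-integral into $\mathrm{Si}(Nt_2\cdots t_k)/(t_2\cdots t_k)$. From that point the two arguments diverge. The paper iterates the substitution in all remaining variables to obtain the identity $\int_{[0,N]^k}\frac{\sin(t_1\cdots t_k)}{t_1\cdots t_k}\,dt = \int_0^{N^k} H^{k-1}F(u)\,du$ with $F(u)=\sin u/u$ and $Hf(x)=\tfrac{1}{x}\int_0^x f$, and then proves an inductive lemma showing $\int_0^t H^{k-1}F \geq c_{k-1}(\log t)^{k-1}$. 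You instead stop after one substitution and exploit the single fact that $\mathrm{Si}\geq 0$ on $[0,\infty)$; this lets you discard the complement of $[1,N]^{k-1}$ for free and reduces everything to the trivial computation $\int_{[1,N]^{k-1}}\frac{dt_2\cdots dt_k}{t_2\cdots t_k}=(\log N)^{k-1}$. Your route is shorter and more elementary, avoiding both the operator $H$ and the induction; the paper's route, on the other hand, isolates a structural lemma about iterated Hardy averages of $\sin u/u$ that may be of some independent interest. Either way the conclusion and even the order of the constant are the same.
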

The proof of this Proposition \ref{below} is based on the following lemma whose enuntiation requires some additional notations.

If $f$ is a bounded measurable function defined on the interval $[0, \infty)$ we denote by $H f(x)$ the linear operator given by

$$ H f(x) = \frac{1}{x} \int_0^x f (u) d u$$
for every $x\in [0, \infty)$. We will also denote by $H^l$ the composition of $H$ with itself $l$ times (as long as $l$ is an integer greater or equal than $1$) and by $H^0$ the identity operator.

\begin{lemma}\label{belowl}
For any integer $k\geq 1$ there exist a small constant $c_{k-1}$ and a large one $\C_{k-1}$ with the property that

\begin{equation}\label{9*}
\int_0^t H^{k-1} F (u) d u \geq c_{k-1} (\log t )^{k-1}
\end{equation}
for every $t\geq \C_{k-1}$, where $F(u) : = \frac{\sin u}{u}$.
\end{lemma}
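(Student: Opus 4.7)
The plan is to prove Lemma \ref{belowl} by induction on $k$, peeling off one layer of $H$ at a time. I will write $I_j(t) := \int_0^t H^{j-1}F(u)\,du$, so the goal becomes $I_k(t) \ge c_{k-1}(\log t)^{k-1}$, and note that for $j\ge 2$ the definition of $H$ yields the recurrence
\[
H^{j-1}F(x)=\frac{1}{x}\int_0^x H^{j-2}F(u)\,du = \frac{I_{j-1}(x)}{x}, \qquad I_j(t)=\int_0^t \frac{I_{j-1}(x)}{x}\,dx.
\]
Since $|F|\le 1$, a trivial induction gives $\|H^j F\|_\infty \le 1$, so $|I_{j-1}(x)|\le x$; in particular $I_{j-1}(x)/x$ is bounded near the origin, making the integrals well-defined and absolutely convergent for every $t>0$.

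For the base case $k=1$, $I_1(t)=\int_0^t \sin u/u\,du$ is the classical sine integral, whose Dirichlet value $\pi/2$ is its limit as $t\to\infty$. Consequently $I_1(t)\ge \pi/4 =: c_0$ for all $t\ge \C_0$ with $\C_0$ chosen large enough, which settles the case $k=1$.

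For the inductive step, assuming $I_{k-1}(x)\ge c_{k-2}(\log x)^{k-2}$ for $x\ge \C_{k-2}$, I would split, for $t\ge \C_{k-2}$,
\[
I_k(t) = \int_0^{\C_{k-2}}\frac{I_{k-1}(x)}{x}\,dx + \int_{\C_{k-2}}^{t}\frac{I_{k-1}(x)}{x}\,dx.
\]
The first summand is a constant $A$ of absolute value at most $\C_{k-2}$ by the boundedness remark; although $I_{k-1}$ is not known to be nonnegative on $[0,\C_{k-2}]$, $A$ is a fixed $t$-independent number. The second summand, by the inductive hypothesis, is bounded below by
\[
c_{k-2}\int_{\C_{k-2}}^{t}\frac{(\log x)^{k-2}}{x}\,dx = \frac{c_{k-2}}{k-1}\bigl[(\log t)^{k-1}-(\log \C_{k-2})^{k-1}\bigr],
\]
and for $t\ge \C_{k-1}$ with $\C_{k-1}$ sufficiently large this dominates $\frac{c_{k-2}}{2(k-1)}(\log t)^{k-1}+|A|$, closing the induction with $c_{k-1}:=c_{k-2}/(2(k-1))$. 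The argument has no deep obstacle; the only mildly delicate point is that on $[0,\C_{k-2}]$ neither positivity nor the logarithmic bound is available for $I_{k-1}$, but the uniform boundedness of $H^{j}F$ guarantees that this region contributes only a harmless additive constant that is swallowed by the main logarithmic term once $t$ is large enough.
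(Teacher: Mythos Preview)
Your proof is correct and follows essentially the same induction-on-$k$ argument as the paper: the base case via the Dirichlet integral, then the recurrence $I_k(t)=\int_0^t I_{k-1}(x)/x\,dx$ split at the threshold $\C_{k-2}$, with the contribution of $[0,\C_{k-2}]$ controlled by $\|H^jF\|_\infty\le 1$ and the tail estimated from below using the inductive hypothesis and the explicit antiderivative of $(\log x)^{k-2}/x$. Your notation $I_j$ and the accompanying boundedness remark in fact make the indexing and well-definedness slightly cleaner than in the paper's write-up.
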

Let us first assume this Lemma \ref{belowl} and show how our previous Proposition \ref{below} can be reduced to it.

If $a$ is any real number different than zero, the function $s\rightarrow \frac{\sin a s}{s}$ is an even function and in particular this implies that

$$\int_{-N}^N \frac{\sin a s}{s} d s = 2 \int_0^N \frac{\sin a s}{s} d s .$$
Using this observation several times, one can see that

\begin{equation}\label{10*}
\int_{-N}^N ... \int_{-N}^N  \frac{\sin ( t_1\cdot ... \cdot t_k) }{t_1\cdot  ... \cdot t_k} d t_1 ... d t_k = 2^k 
\int_{0}^N ... \int_{0}^N  \frac{\sin ( t_1\cdot ... \cdot t_k) }{t_1\cdot  ... \cdot t_k} d t_1 ... d t_k .
\end{equation}
We claim now that the following identity holds

\begin{equation}\label{11*}
\int_{0}^N ... \int_{0}^N  \frac{\sin ( t_1\cdot ... \cdot t_k) }{t_1\cdot  ... \cdot t_k} d t_1 ... d t_k  =
\int_0^{N^k} H^{k-1} F (u) d u .
\end{equation}
Clearly, if we take this equality for granted then \eqref{11*}, \eqref{10*} and \eqref{9*} together imply the desired \eqref{8*}.

For simplicity, we will prove \eqref{11*} in the particular case $k=3$ and leave the general case to the reader, since it does not require any additional ideas.

One can write

$$\int_0^N \int_0^N \int_0^N
\frac{\sin ( t_1 t_2 t_3) }{t_1 t_2 t_3} d t_1 d t_2 d t_3 =
\int_0^N \int_0^N (\int_0^N \frac{\sin ( t_1 t_2 t_3) }{t_3} d t_3 ) \frac{d t_1}{t_1} \frac{d t_2}{t_2}
$$

$$=
\int_0^N \int_0^N (\int_0^{N t_1 t_2} \frac{\sin x}{ x} d x ) \frac{d t_2}{t_2} \frac{d t_1}{t_1}
= \int_0^N \int_0^{N^2 t_1} (\int_0^y \frac{\sin x}{ x} d x ) \frac{d y}{y} \frac{d t_1}{t_1}$$

$$= \int_0^{N^3} \int_0^z ( \int_0^y \frac{\sin x}{ x} d x ) \frac{d y}{y} \frac{d z}{z} =
\int_0^{N^3} (\frac{1}{z} \int_0^z (\frac{1}{y} \int_0^y \frac{\sin x}{ x} d x ) d y) d z$$

$$= \int_0^{N^3} H^2 F (z) d z$$
as desired.

We are left with the proof of Lemma \ref{belowl}. We proceed by induction. Clearly, the $k=1$ case is a simple
consequence of the fact that $\int_0^{\infty} \frac{\sin x}{x} d x = \frac{\pi}{2}$. Suppose now that \eqref{9*} holds for the parameter $k-1$
and we would like to prove it for $k$.

One writes

\begin{equation}\label{12*}
\int_0^t H^k F (u) d u = \int_0^t ( \frac{1}{x} \int_0^x H^{k-1} F (u) d u ) d x 
\end{equation}

\begin{equation}\label{13*}
= \int_0^{\C_{k-1}} ... + \int_{\C_{k-1}}^t ... \, .
\end{equation}
The absolute value of the first term in \eqref{13*} is clearly at most $\C_{k-1}$ given that $|F(u)| \leq 1$. Using
the induction hypothesis on the other hand, one can estimate the second term in \eqref{13*} from below by

$$c_{k-1} \int_{\C_{k-1}}^t \frac{1}{x} (\log x )^{k-1} d x = \frac{c_{k-1}}{k} \int_{\C_{k-1}}^t [ (\log x )^k ]' d x
$$

$$ = \frac{c_{k-1}}{k} ( (\log t )^k - (\log \C_{k-1})^k ) .$$
All of these imply that the left hand side of \eqref{12*} can be estimated from below by 

$$ = \frac{c_{k-1}}{k} ( (\log t )^k - (\log \C_{k-1})^k ) - \C_{k-1} .$$

But this expression is at least as big as $\frac{c_{k-1}}{2 k} (\log t )^k$
if $t$ is large enough and this completes the proof of Lemma \ref{belowl} and therefore of Proposition \ref{below}.

\section{Upper logarithmic bounds}

Our final goal now is to prove the upper logarithmical bounds for the second term in \eqref{7*}, that have been claimed in Section \ref{s3}.

\begin{proposition}\label{p2}
Let $D$ be a compact and convex domain in $\R^k$ having the property that 

$$[-c N, c N]^k \subseteq D \subseteq [-C N , C N]^k$$
where $c$ and $C$ are fixed constants and $N$ is large enough. Then, there exists $\widetilde{C} ( = \widetilde{C}(k))$ so that

\begin{equation}\label{14*}
| \int_{D \setminus [-c N, c N]^k} 
\frac{\sin ( t_1\cdot ... \cdot t_k) }{t_1\cdot  ... \cdot t_k} d t_1 ... d t_k |
\leq \widetilde{C} ( \log N )^{k-2}.
\end{equation}

\end{proposition}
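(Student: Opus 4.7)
The plan is to peel off one variable at a time, exploiting the Dirichlet-integral identity $\int_{\alpha}^{\beta}\frac{\sin(bs)}{bs}\,ds=\frac{1}{b}[\mathrm{Si}(b\beta)-\mathrm{Si}(b\alpha)]$ together with the asymptotic $\mathrm{Si}(x)=(\pi/2)\mathrm{sgn}(x)+O(1/|x|)$, so as to gain one logarithm over the naive Fubini count of $(\log N)^{k-1}$. By the evenness of the integrand in each $t_i$, I work orthant by orthant: after reflection it suffices to bound the integral over $D^+\setminus[0,cN]^k$, where $D^+$ is the restriction of $D$ to the positive orthant and still satisfies $[0,cN]^k\subseteq D^+\subseteq[0,CN]^k$. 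I integrate in $t_k$ for fixed $(t_1,\ldots,t_{k-1})$, splitting into two cases.

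\emph{Inner case:} $(t_1,\ldots,t_{k-1})\in[0,cN]^{k-1}$. Convexity of $D$ together with $[-cN,cN]^k\subseteq D$ forces the $D^+$-slice in $t_k$ to be an interval $[0,\beta]$ with $\beta\ge cN$, so the $t_k$-slice of $D^+\setminus[0,cN]^k$ is $(cN,\beta]$. With $b=t_1\cdots t_{k-1}\ge 0$, the $t_k$-integral equals $\frac{1}{b}[\mathrm{Si}(b\beta)-\mathrm{Si}(bcN)]$; both $\mathrm{Si}$ arguments are positive and at least $b\cdot cN$, so their $(\pi/2)\mathrm{sgn}$ leading terms cancel and one obtains the sharp bound $O(\min(N,\,1/(b^{2}N)))$. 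Integrating over $[0,cN]^{k-1}$ and partitioning by $b\lessgtr 1/N$ yields $O((\log N)^{k-2})$: on $\{b\le 1/N\}$ one uses the standard volume estimate $|\{t\in[0,cN]^{k-1}:b\le 1/N\}|=O((\log N)^{k-2}/N)$ for products of uniform variables, while on the complement a passage to logarithmic coordinates $v_i=-\log(t_i/cN)$ turns $\int b^{-2}N^{-1}\,dt$ into a simplex integral that is again $O((\log N)^{k-2})$.

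\emph{Outer case:} some $t_i>cN$ with $i<k$. Here the slice of $[0,cN]^k$ is empty, so the $t_k$-integral is $\mathrm{Si}(b\beta)/b$, bounded only by $O(\min(N,1/|b|))$ with no built-in cancellation. I compensate by pulling out one large coordinate $t_j$ ($j<k$) and running the same small/large dichotomy on the remaining $(k-2)$-dimensional integral of $\min(N,1/(b't_j))$ with $b'=\prod_{i<k,\,i\ne j}t_i$; a direct computation produces an integrand of size $O((\log N)^{k-2}/t_j)$ in $t_j$, and integrating $dt_j/t_j$ over $(cN,CN]$ contributes only a bounded constant. Summing the $k-1$ possible choices of the large coordinate preserves the bound $O((\log N)^{k-2})$.

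The main obstacle lies in the outer case: the Dirichlet cancellation that made the inner case sharp is absent, and the missing logarithm must be recovered from the factor $1/t_j$ via careful bookkeeping in log-coordinates, relying throughout on the convexity of $D$ to guarantee that each one-dimensional slice is a single interval, so that the $\mathrm{Si}$ identity applies directly.
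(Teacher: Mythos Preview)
Your approach is correct and arrives at the right bound, but it differs substantially from the paper's proof.  The paper first isolates an auxiliary lemma: for \emph{any} compact convex $D\subseteq[-N,N]^k$ one has $\bigl|\int_D \frac{\sin(t_1\cdots t_k)}{t_1\cdots t_k}\,dt\bigr|\le C(\log N)^{k-1}$, proved by induction on $k$ via a dyadic decomposition in $|\vec t|_\infty$.  The proposition is then almost immediate: on $D\setminus[-cN,cN]^k$ at least one coordinate, say $t_1$, satisfies $|t_1|\in[cN,CN]$; after rescaling $s_j=t_1^{1/(k-1)}t_j$ for $j\ge 2$ the slice integral becomes a $(k-1)$-dimensional instance of the lemma, hence $O((\log N)^{k-2})$, and the remaining $\int_{cN}^{CN}dt_1/t_1$ contributes only a constant.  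Your route instead integrates out $t_k$ directly via the $\mathrm{Si}$ identity and then handles the remaining $(k-1)$-fold integral by explicit level-set and log-coordinate volume computations, exploiting in the inner case the sharper tail $O(1/(b^2N))$ coming from the cancellation of the two $\mathrm{Si}$ leading terms.  The paper's argument is more modular (the lemma is of independent use and makes the proposition a two-line corollary), whereas yours is self-contained but heavier on explicit estimates.

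Two small inaccuracies that do not affect the argument: first, after reflecting an orthant the relevant convex body is $(\epsilon\cdot D)\cap[0,\infty)^k$, which depends on the sign vector $\epsilon$ and is not a single fixed $D^+$; all $2^k$ of these still satisfy the same cube inclusions, so your estimates apply to each.  Second, in the outer case the $t_k$-slice of $D^+$ need not start at $0$; it is some interval $[\alpha,\beta]\subseteq[0,CN]$, so the inner integral is $(\mathrm{Si}(b\beta)-\mathrm{Si}(b\alpha))/b$ rather than $\mathrm{Si}(b\beta)/b$.  Your bound $O(\min(N,1/b))$ holds regardless, since $\mathrm{Si}$ is bounded.
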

We claim that the above Proposition \ref{p2} follows easily from the following

\begin{lemma}\label{l2}
Let $D$ be a compact and convex domain in $\R^k$ having the property that

$$D \subseteq [ -N, N]^k.$$
Then, there exists $C ( = C(k))$ so that

\begin{equation}\label{15*}
| \int_D \frac{\sin ( t_1\cdot ... \cdot t_k) }{t_1\cdot  ... \cdot t_k} d t_1 ... d t_k | \leq C (\log N )^{k-1}.
\end{equation}

\end{lemma}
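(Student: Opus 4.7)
The plan is to integrate in $t_k$ last using Fubini, bound the one-dimensional inner integral by means of the sine integral $\mathrm{Si}(x) := \int_0^x \frac{\sin t}{t}\,dt$ (which is uniformly bounded on $\R$), and then split the remaining integral over $(t_1,\ldots,t_{k-1}) \in [-N,N]^{k-1}$ according to the magnitude of the product $T := t_1\cdots t_{k-1}$.

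First, by convexity of $D$, for each $(t_1,\ldots,t_{k-1})$ in the projection of $D$ onto the first $k-1$ coordinates, the slice $\{t_k : (t_1,\ldots,t_k)\in D\}$ is an interval $[a,b]\subseteq[-N,N]$. When $T\neq 0$ I would substitute $s=Tt_k$ to obtain
$$\int_a^b \frac{\sin(Tt_k)}{Tt_k}\,dt_k \;=\; \frac{1}{T}\bigl(\mathrm{Si}(Tb)-\mathrm{Si}(Ta)\bigr),$$
whose modulus is at most $C_0/|T|$ with $C_0 := 2\sup_{\R}|\mathrm{Si}|<\infty$. The trivial estimate $|\sin u/u|\le 1$ simultaneously gives a bound of $b-a\le 2N$, so the absolute value of the inner integral is at most $\min(2N,\,C_0/|T|)$.

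Second, I would split the outer region at the balance point $|T|=1/N$. On the subregion $\{|T|\le 1/N\}\cap[-N,N]^{k-1}$ I use the bound $2N$ together with the volume estimate
$$\bigl|\{t\in[-N,N]^{k-1}: |t_1\cdots t_{k-1}|\le 1/N\}\bigr|\;\lesssim\;\frac{(\log N)^{k-2}}{N},$$
which is proved by a short induction on $k-1$ (or, equivalently, by passing to the coordinates $u_i=\log|t_i|$), yielding a contribution of size $O((\log N)^{k-2})$. On the complementary region $\{|T|>1/N\}\cap[-N,N]^{k-1}$ I use the bound $C_0/|T|$; by symmetry the integral is $2^{k-1}$ times the analogous one on $[0,N]^{k-1}$, and the substitution $u_i = \log t_i$ reduces it to the Lebesgue measure of
$$\{(u_1,\ldots,u_{k-1})\in(-\infty,\log N]^{k-1}:\; u_1+\cdots+u_{k-1}>-\log N\}.$$
Setting $w_i = \log N - u_i\ge 0$ this becomes the simplex $\{w_i\ge 0,\ \sum w_i<k\log N\}$, whose volume equals $(k\log N)^{k-1}/(k-1)!$. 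The contribution from this region is therefore $O((\log N)^{k-1})$, and adding the two pieces gives the claimed bound $C(\log N)^{k-1}$.

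The main obstacle is a careful execution of the small-$|T|$ volume estimate: the naive bound $\int_{-N}^N dt_i/|t_i|$ diverges logarithmically at the origin, so one has to exploit the constraint $|T|\le 1/N$ (equivalently $\sum w_i \le k\log N - \log M$ in the logarithmic variables) in order to save one power of $\log N$ and keep the $A_1$ contribution strictly subordinate to $(\log N)^{k-1}$. Everything else is bookkeeping: convexity of $D$ enters only to guarantee that each $t_k$-slice is an interval so that the one-variable sine-integral bound applies directly, and the uniform boundedness of $\mathrm{Si}$ is classical. I note, moreover, that the balance between the two bounds on the inner integral is sharp at $|T|\sim 1/N$, so the splitting at this threshold is the natural one.
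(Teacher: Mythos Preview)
Your argument is correct and takes a genuinely different route from the paper's. The paper proves the lemma by induction on $k$: the base case $k=1$ is just the boundedness of $\mathrm{Si}$, and for the inductive step the integral over $D$ is sliced into dyadic shells $D\cap\{2^d<|\vec t|_\infty\le 2^{d+1}\}$, $0\le d\le\log N$; on each shell at least one coordinate has size $\sim 2^d$, so after rescaling one applies the $(k-1)$-dimensional hypothesis (exactly the manoeuvre used in the surrounding Proposition) to bound the shell by $C d^{k-2}$, and summation in $d$ yields $(\log N)^{k-1}$. You instead integrate out $t_k$ once and for all, use convexity only to ensure that the $t_k$-slice is an interval, bound the resulting one-dimensional sine integral by $\min(2N,\,C_0/|T|)$ with $T=t_1\cdots t_{k-1}$, and then split the remaining $(k-1)$-fold integral at the natural balance point $|T|=1/N$. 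This trades the paper's outer induction and dyadic shells for one level-set volume estimate plus one simplex computation in logarithmic coordinates; it is more self-contained and makes the constant essentially explicit. The volume bound you quote is the standard identity $|\{s\in[0,1]^m:\prod s_i\le\delta\}|=\delta\sum_{j=0}^{m-1}(\log(1/\delta))^j/j!$ after the rescaling $s_i=t_i/N$, $\delta=N^{-k}$, so your sketch there is on solid ground. Both proofs ultimately rest on the same two ingredients---boundedness of $\mathrm{Si}$ and convexity of slices---but the paper's inductive-dyadic scheme dovetails with the proof of the enclosing Proposition, while yours is a cleaner stand-alone argument.
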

Let us see first why Lemma \ref{l2} implies Proposition \ref{p2}.

If $\vec{t} = (t_1, ..., t_k) \in D \setminus [-c N, c N]^k$ then at least for one index $1\leq i\leq k$ one must have $t_i \notin [-c N, c N]$. But then, this means that
$t_i \in [-C N, - c N] \cup [c N , C N]$. 

Let us examine now the following two {\it extremal} cases.

Suppose first that $t_i \in [-C N, - c N] \cup [c N , C N]$ for {\it every} $1\leq i\leq k$. In this case it is not difficult to see that the integral over that corresponding
region is at most

$$\int_{c N}^{C N} ... \int_{c N}^{C N}
\frac{ d t_1}{t_1} ... 
\frac{ d t_k}{t_k}
$$
which is clearly bounded by a constant independent of $N$.

Assume now that we are in the opposite situation when precisely one index $i$ has the property that $t_i \in [-C N, - c N] \cup [c N , C N]$. By symmetry, we can also assume
that that index is $1$ and that $t_1 \in [ c N , C N]$. In this case, it is also not difficult to see that the integral over the corresponding region can be expressed as

\begin{equation}\label{16*}
\int_{c N}^{C N} \frac{1}{t_1}
(\int_{D_{t_1}} \frac{\sin ( t_1\cdot ... \cdot t_k) }{t_2\cdot  ... \cdot t_k} d t_2 ... d t_k ) d t_1
\end{equation}
where

$$D_{t_1} : = \{ (t_2, ..., t_k) : (t_1, t_2, ..., t_k) \in D \}.
$$
It is natural to change variables $t_1^{1/k-1} t_j = s_j$ for $2\leq j\leq k$ and rewrite \eqref{16*} as 

\begin{equation}\label{17*}
\int_{c N}^{C N} \frac{1}{t_1}
(\int_{\widetilde{D}_{t_1}} \frac{\sin ( s_2\cdot ... \cdot s_k) }{s_2\cdot  ... \cdot s_k} d s_2 ... d s_k ) d t_1
\end{equation}
where $\widetilde{D}_{t_1}$ is also compact and convex and has the property that

$$\widetilde{D}_{t_1} \subseteq [ - C N t_1^{1/k-1}, C N t_1^{1/k-1} ] ^{k-1}.
$$
Using Lemma \ref{l2} one can then bound \eqref{17*} easily by $C ( \log N ) ^{k-2}$ which is of course acceptable by \eqref{14*}.

The general case when an arbitrary number of indices $i$ satisfy $t_i \in [-C N, - c N] \cup [c N , C N]$ can be treated similarly and the corresponding upper bound will be of the form 
$C (\log N) ^l$ for some $0\leq l \leq k-2$. Since there are only a finite number of such situations, this completes the proof of \eqref{14*}.

We are left with the proof of Lemma \ref{l2}. We proceed as before by induction.

The case $k=1$ is obviously true, since $D$ is now an interval $[a , b]$ and \eqref{15*} becomes equivalent to

$$| \int_a^b \frac{\sin x}{x} d x | \leq C.$$
Let us consider now the general case of \eqref{15*} assuming (by the induction hypothesis) that all the previous ones are known.

Decompose the inner integral in \eqref{15*} as

\begin{equation}\label{18*}
\int_D \frac{\sin ( t_1\cdot ... \cdot t_k) }{t_1\cdot  ... \cdot t_k} d t_1 ... d t_k
\end{equation}

$$ = \int_{ D \cap \{ \vec{t} : |\vec{t}|_{\infty}\leq 1 \}} \frac{\sin ( t_1\cdot ... \cdot t_k) }{t_1\cdot  ... \cdot t_k} d t_1 ... d t_k
+  \sum_{d=0}^{\log N} \int_{ D \cap \{ \vec{t} : 2^d < |\vec{t}|_{\infty}\leq 2^{d+1} \}} \frac{\sin ( t_1\cdot ... \cdot t_k) }{t_1\cdot  ... \cdot t_k} d t_1 ... d t_k 
\footnote{ We use the notation $|\vec{t}|_{\infty} := \max_{1\leq i\leq k} |t_i|$.} .
$$
Arguing as before and using the induction hypothesis one can see that

\begin{equation}\label{19*}
| \int_{ D \cap \{ \vec{t} : 2^d < |\vec{t}|_{\infty}\leq 2^{d+1} \}} \frac{\sin ( t_1\cdot ... \cdot t_k) }{t_1\cdot  ... \cdot t_k} d t_1 ... d t_k | 
\leq C d^{k-2}.
\end{equation}

Finally, using \eqref{19*} in \eqref{18*} one obtains the desired \eqref{15*}.

\section{Further remarks}

First of all, as we promised, we would like to explain why Proposition \ref{precis} holds true even for $k=2$ and $n=4$. Recall from \eqref{kernel} the kernel representation
of $4 HT_{\vec{\alpha_1}, \vec{\alpha_2}} (f, f_1, f_2, f_3)(x)$ as

\begin{equation}\label{[1]}
-\frac{1}{\pi^2} p. v. \int_{\R^2}
f(x+t+s)
\prod_{j=1}^3
f_j(x+\alpha_{1\, j}s + \alpha_{2\, j}t)
\frac{d t}{t} \frac{d s}{s}
\end{equation}
where $\vec{\alpha_1} = (\alpha_{1\,1}, \alpha_{1\, 2}, \alpha_{1\,3})$ and $\vec{\alpha_2} = (\alpha_{2\,1}, \alpha_{2\, 2}, \alpha_{2\,3})$ are two generic vectors in $\R^3$.

Consider as before $f(x) = e^{ i \# x^2}$ and $f_j(x) = e^{ i \#_j x^2}$ for $j=1,2,3$ where $\#, \#_1, \#_2, \#_3$ are real numbers that will be determined later on. If one formally plugs in these
functions into \eqref{[1]}, the corresponding expression in \eqref{polinom} becomes a polynomial in the variables $x, t, s$ which is homogeneous of degree $2$. This polynomial has
precisely six monomials, namely $xt, xs, ts, t^2, s^2$ and $x^2$ each of which having its corresponding coefficient.
We would like to choose our numbers $\#, \#_1, \#_2, \#_3$ so that the coefficients of $xt, xs$ and $s^2$ are all zero. As we discussed earlier in Section \ref{s2}, this amounts
to pick a vector $(\#, \#_1, \#_2, \#_3) \in \R^4$ orthogonal to three other generic linearly independent $4$ dimensional vectors, which is clearly possible. Using this choice, the analogous of 
\eqref{alta} becomes

\begin{equation}\label{[2]}
| 4 HT_{\vec{\alpha_1}, \vec{\alpha_2}} (f, f_1, f_2, f_3)(x) | = \frac{1}{\pi^2}
| \int_{\R^2} e^{i\alpha t^2} e^{i\beta t s} \frac{d t}{t} \frac{d s}{s} | 
\end{equation}
where $\alpha, \beta$ are real numbers depending on the previous parameters $\vec{\alpha_1}, \vec{\alpha_2}$ and $\#, \#_1, \#_2, \#_3$. By construction, one can
also assume without loss of generality that $\alpha > 0$. As in Section \ref{s2} one then observes that the expression on the right hand side of \eqref{[2]} can be calculated further as

$$
| \int_{\R^2} e^{i\alpha t^2} e^{i\beta t s} \frac{d t}{t} \frac{d s}{s} |  = \frac{1}{\pi} | \int_{\R} \frac{\sgn (t)}{t} e^{i\alpha t^2} d t |
$$

$$ = \frac{1}{\pi} | \int_0^{\infty} \frac{ e^{i\alpha t}}{t} d t | = \frac{1}{\pi} | \int_0^{\infty} \frac {\cos t}{t} d t + i \frac{\pi}{2} |
$$
and while $\int_1^{\infty} \frac {\cos t}{t} d t$ is bounded, the integral $\int_0^1 \frac {\cos t}{t} d t$ is infinite.

To transform this heuristical argument into a rigorous one, one proceeds as before. The details are left to the reader.

Then, we would like to describe the construction of the {\it reducible} counterexamples from the previous {\it irreducible} ones, completing in this way the proof of Theorem \ref{teorema}. 
Fix $k\geq 2$ and $n$ such that $n\geq N(k)$ (here if $k=2$ we should replace $N(2)$ by $4$ since we now know
that Proposition \ref{precis} still holds in this situation). Consider also a generic operator of type $n HT_{\vec{\alpha_1}, ..., \vec{\alpha_{k'}} }$ where $k'\geq k$. To construct such a {\it reducible} counterexample for it 
we proceed as follows. At the first step, take the {\it irreducible} counterexample given by Proposition \ref{precis} for the less complex operator $n HT_{\vec{\alpha_1}, ..., \vec{\alpha_{k}} }$. By a simple approximation argument
one can also assume without loss of generality that the functions which appear in the counterexample are all compactly suppported in frequency. Then, by using the dilation invariance of these operators,
one can rescale it, and obtain a counterexample whose Fourier transform is supported inside the unit cube of $\R^n$. After that, using the modulation invariance of the operators, one observes that {\it this unit cube}
can be translated anywhere along the subspace $$\Gamma_{\vec{\alpha_1}}\cap ... \cap \Gamma_{\vec{\alpha_k}}$$ and still remains a counterexample for the boundedness of $n HT_{\vec{\alpha_1}, ..., \vec{\alpha_{k}} }$. It is not difficult to observe that 
one can do this in such a way that the new translated unit cube does not intersect any of the remaining subspaces $\Gamma_{\vec{\alpha_{k+1}}}, ... , \Gamma_{\vec{\alpha_{k'}}}$. But then this means that
these new functions which correspond to the new rescaled and translated cube, automatically become a counterexample for our original, more complex operator $n HT_{\vec{\alpha_1}, ..., \vec{\alpha_{k'}} }$.

It is also natural to ask, given the previous Theorem \ref{general} and Theorem \ref{mult}, what can be said in the remaining cases, when the dimension of the singularity subspace $\Gamma$ satisfies

$$\dim(\Gamma) \geq \frac{n+1}{2} = n - \frac{n-1}{2}.$$
Using an argument similar to the one before, one can prove

\begin{corollary}\label{coro}
Let $n\geq 5$. For any $1<p_1, ..., p_n \leq \infty$ and $0<p<\infty$ with $1/p_1+ ... +1/p_n = 1/p$,
and for any integer $k$ satisfying

$$k > n - (\frac{n-1}{2})^{1/2}$$
there exist non-degenerate subspaces $\Gamma \subseteq \R^n$ with $\dim(\Gamma) = k$
and symbols $m \in \M_{\Gamma}(\R^n)$, for
which the associated  $n$-linear multiplier operators $T_m$ do not map $L^{p_1}\times ... \times L^{p_n}$ into $L^p$.
\end{corollary}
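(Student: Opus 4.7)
The idea is to mirror the passage from Theorem~\ref{teorema} to Theorem~\ref{mult}, but applied to the multi-sign operator $nHT_{\vec{\alpha_1},\dots,\vec{\alpha_K}}$ with $K:=n-k$. The hypothesis $k>n-\sqrt{(n-1)/2}$ is equivalent to $K<\sqrt{(n-1)/2}$, which keeps $K$ small enough that $n$ meets the threshold of Proposition~\ref{precis}, using the $N(2)=4$ improvement pointed out in Section~\ref{s2} for the base case $K=2$, and the reducible-counterexample construction described at the end of the paper to extend to slightly larger $K$ when $n<N(K)$.

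Concretely, I would first pick generic vectors $\vec{\alpha_1},\dots,\vec{\alpha_K}\in\R^{n-1}$ whose associated hyperplanes $\Gamma_{\vec{\alpha_j}}$ intersect in a non-degenerate subspace $\Gamma:=\Gamma_{\vec{\alpha_1}}\cap\cdots\cap\Gamma_{\vec{\alpha_K}}$ of dimension exactly $n-K=k$. Proposition~\ref{precis} (or its reducible extension) then guarantees that $nHT_{\vec{\alpha_1},\dots,\vec{\alpha_K}}$ fails every H\"older-type $L^p$ estimate. Next I would decompose its symbol $\prod_{j=1}^K\sgn(L_j)$ via a partition of unity in frequency space that separates a small conical neighborhood of $\Gamma$ from the remaining regions, arranging matters so that each resulting piece lies in $\M_\Gamma(\R^n)$: near $\Gamma$ the distance to $\Gamma$ controls the distances to all the $\Gamma_{\vec{\alpha_j}}$ simultaneously, so the Mikhlin bounds relative to $\Gamma$ hold automatically; away from $\Gamma$ the individual signs $\sgn(L_j)$ are replaced by smooth Mikhlin surrogates whose only singular locus is $\Gamma$ itself. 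Since the sum of these pieces recovers the full (unbounded) operator, at least one summand $T_{m^\ast}$ must be unbounded, and by construction $m^\ast\in\M_\Gamma(\R^n)$ with $\dim\Gamma=k$, exactly as required.

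The main obstacle will be the decomposition step: the naive partition-of-unity split of $\prod_{j=1}^K\sgn(L_j)$ (directly mimicking the proof of Theorem~\ref{mult}) yields pieces lying in $\M_{\Gamma_{\vec{\alpha_j}}}(\R^n)$, i.e.\ symbols singular along an $(n-1)$-dimensional hyperplane, rather than in the strictly smaller class $\M_\Gamma(\R^n)$ of symbols singular along a $k$-dimensional subspace. To land in $\M_\Gamma$ one must actually smooth out each $\sgn(L_j)$ in the frequency region away from $\Gamma$ so that the resulting symbol has a \emph{single} singular locus equal to $\Gamma$, and show that the finitely many bounded contributions introduced by this smoothing cannot absorb the logarithmic blow-up supplied by the explicit chirp construction of Section~\ref{s3}. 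Verifying compatibility of this smoothing with the Mikhlin condition $|\partial^\gamma m|\lesssim\dist(\cdot,\Gamma)^{-|\gamma|}$ (stronger than the bound $\dist(\cdot,\Gamma_{\vec{\alpha_j}})^{-|\gamma|}$ one would otherwise have) is precisely where the numerical constraint $k>n-\sqrt{(n-1)/2}$ enters.
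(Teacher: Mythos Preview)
Your approach has a genuine gap that cannot be repaired, and it differs fundamentally from the paper's argument.

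The fatal problem is the decomposition step. You want to write the product symbol $\prod_{j=1}^K\sgn(L_j)$ as a finite sum of symbols in $\M_\Gamma(\R^n)$ with $\Gamma=\bigcap_j\Gamma_{\vec{\alpha_j}}$. But every element of $\M_\Gamma(\R^n)$ is smooth on $\R^n\setminus\Gamma$, and so is any finite sum of such elements. The product $\prod_j\sgn(L_j)$, however, is genuinely discontinuous across each hyperplane $\Gamma_{\vec{\alpha_j}}$, hence on a set of dimension $n-1$, much larger than $\Gamma$. So no such decomposition exists. Your proposed fix---smoothing each $\sgn(L_j)$ away from $\Gamma$---does not help: the difference between the original and the smoothed symbol is still singular along the hyperplanes (away from a neighbourhood of $\Gamma$), so the corresponding operator is of exactly the same nature as $nHT_{\vec{\alpha_1},\dots,\vec{\alpha_K}}$ itself, and there is no reason whatsoever to expect it to be bounded. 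Your claim that the smoothing introduces only ``finitely many bounded contributions'' is unsupported, and your assertion that the numerical constraint $k>n-\sqrt{(n-1)/2}$ enters through Mikhlin compatibility of the smoothing is incorrect.

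The paper avoids this obstacle by never using products of signs at all. Instead it replaces each $\sgn$ factor by a $d$-dimensional Riesz kernel $K(\vec{t})=t_1/|\vec{t}|^{d+1}$ with $d=n-k$, so that the operator is
\[
\int_{\R^{2d}} f_1(x-\vec{a_1}\cdot\vec{t}-\vec{b_1}\cdot\vec{s})\cdots f_n(x-\vec{a_n}\cdot\vec{t}-\vec{b_n}\cdot\vec{s})\,K(\vec{t})K(\vec{s})\,d\vec{t}\,d\vec{s}.
\]
The point is that $\widehat{K}(\vec{\eta})=C_d\,\eta_1/|\vec{\eta}|$ is already a Mikhlin symbol on $\R^d$, singular only at the origin; hence $\widehat{K}(A\vec{\eta})$ is singular precisely along $\ker A$, a subspace of dimension $n-d=k$. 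The full symbol $\widehat{K}(A\vec{\eta})\widehat{K}(B\vec{\eta})$ then splits as $m_1+m_2$ with $m_1\in\M_{\ker A}(\R^n)$ and $m_2\in\M_{\ker B}(\R^n)$ by the same cutoff argument as in Theorem~\ref{mult}, and this time the decomposition is legitimate because each factor already has the correct $(n-d)$-dimensional singular locus. The unboundedness is shown via quadratic chirps $f_j(x)=e^{2\pi i\#_j x^2}$; the exponent becomes a quadratic form in the $2d+1$ variables $x,t_1,\dots,t_d,s_1,\dots,s_d$, containing $2d^2+2d+1$ monomials, and killing all but the $d+1$ monomials $x^2,t_1s_1,\dots,t_ds_d$ requires $n\ge 2d^2+d+1$. \emph{That} monomial count is where the constraint $d<\sqrt{(n-1)/2}$, equivalently $k>n-\sqrt{(n-1)/2}$, actually comes from.
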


\begin{proof}
Let $d\geq 1$ and denote by $K(\vec{t}) = \frac{t_1}{|\vec{t}|^{d+1}}$ the first $d$-dimensional Riesz kernel. It is a classical well known fact that $\widehat{K}(\vec{\eta}) = C_d \frac{\eta_1}{|\vec{\eta}|}$
where $C_d$ is a constant depending only on the dimension, see for instance \cite{sw}. For $n\geq d$ consider the $n$-linear operator defined by the formula

\begin{equation}\label{riesz}
\int_{\R^{2d}} f_1 (x - \vec{a_1}\cdot \vec{t} - \vec{b_1}\cdot \vec{s})\cdot ... \cdot  f_n (x - \vec{a_n}\cdot \vec{t} - \vec{b_n}\cdot \vec{s})         K(\vec{t}) K(\vec{s}) \, d \vec{t} \,d \vec{s}
\end{equation}
where $\vec{a_j}$, $\vec{b_j}$ are generic vectors in $\R^d$ while $\vec{a_j}\cdot \vec{t}$ and $\vec{b_j}\cdot \vec{s}$ are $d$ dimensional inner products, for $1\leq j\leq d$. 
Recall that the functions $f_1$, ..., $f_n$ are all defined on the real line.

Alternatively, as before, one can rewrite \eqref{riesz} as 

$$
\int_{\R^n} \widehat{K}( A \vec{\eta})  \widehat{K}( B \vec{\eta})
\widehat{f_1}(\eta_1)  ... \widehat{f_n}(\eta_n) e^{2 \pi i x (\eta_1 + ... + \eta_n)} d\eta_1 ... d\eta_n
$$
where $A = [\vec{a_1} ... \vec{a_n}]$ and $B = [\vec{b_1} ... \vec{b_n}]$ are both matrices having $d$ lines and $n$ columns and they define linear maps from $\R^n$ to $\R^d$. Since 
$\widehat{K}$ is singular only at the origin, it is clear that the symbol of our operator $\vec{\eta} \rightarrow \widehat{K}( A \vec{\eta})  \widehat{K}( B \vec{\eta})$ will be singular along
$\ker (A) \cup \ker (B)$ and for generic matrices $A$ and $B$ both of these subspaces will have dimension $n-d$. Consider now $f_j(x) = e^{2\pi i \#_j x^2}$ for $1\leq j\leq n$ where the real numbers
$(\#_j)_{j=1}^n$ will be determined later. If one formally plugs in these functions into the formula \eqref{riesz}, the exponent of the new complex exponential is the quadratic expression

\begin{equation}\label{quad}
\#_1 (x - \vec{a_1}\cdot \vec{t} - \vec{b_1}\cdot \vec{s})^2 + ... + \#_n (x - \vec{a_n}\cdot \vec{t} - \vec{b_n}\cdot \vec{s})^2
\end{equation}
depending on the variables $x$, $t_1$, ..., $t_d$, $s_1$, ..., $s_d$. If one expands \eqref{quad}, one can see by an elementary calculation, that it contains $2d^2+2d+1$ quadratic monomials.
We choose now the vector $\vec{\#} = (\#_1, ..., \#_n)$ in such a way that all the coefficients of these monomials vanish, with the exception of the coefficients corresponding to $x^2$, $t_1s_1$, ..., $t_ds_d$.
As we have seen before, this is equivalent to the fact that $\vec{\#}$ is orthogonal to $2d^2+d$ other linearly independent $n$ dimensional vectors, which is clearly possible as long as $n\geq 2d^2+d+1$.
From this we deduce that $d < (\frac{n-1}{2})^{1/2}$ which in particular implies 

\begin{equation}\label{ker}
\dim (\ker (A)) = \dim (\ker (B)) >  n - (\frac{n-1}{2})^{1/2}.
\end{equation}
On the other hand, it is not difficult to see that the absolute value of the corresponding expression in \eqref{riesz}, is comparable to the divergent integral

$$\int_{\R^d}\frac{t_1^2}{|\vec{t}|^{d+2}} d \vec{t}$$
a fact that can be used, also as before, to show that the $n$-linear operator \eqref{riesz} does not satisfy {\it any} $L^p$ estimates. Since this operator can be also naturally decomposed as
$T_{m_1} + T_{m_2}$ with $m_1\in \M_{\ker (A)}(\R^n)$ and $m_1\in \M_{\ker (B)}(\R^n)$, it is clearly impossible for both $T_{m_1}$ and $T_{m_2}$ to satisfy the required particular estimates of Corollary \ref{coro}.
\end{proof}

Another interesting consequence is the following. On the one hand, let us observe that Theorem \ref{teorema} holds not only for the operators $n HT_{\vec{\alpha_1}, ..., \vec{\alpha_k} }$,
but also for operators whose symbols are given by products of type \eqref{psymbol}, where ``$\sgn$'' is replaced by ``$1_{\R_+}$'' the characteristic function of the set of positive real numbers. Indeed, this change
corresponds to replacing the previous kernels ``$\frac{1}{t}$'' by ``$\frac{1}{t} + \delta_0(t)$'' (where $\delta_0(t)$ is the Dirac delta distribution centered at the origin) and it is not difficult to see that this does
not change the outcome of the previous arguments. In particular, when $k=2$, this implies that $n$-linear operators with symbols such as the ones described on the right hand side of Figure \ref{fig1}, do not satisfy 
any $L^p$ estimates of H\"{o}lder type \footnote{The diagram should be understood in $\R^n$. The two lines represent two generic hyperspaces $\Gamma_1$ and $\Gamma_2$.}.

\begin{figure}[htbp]\centering
\psfig{figure=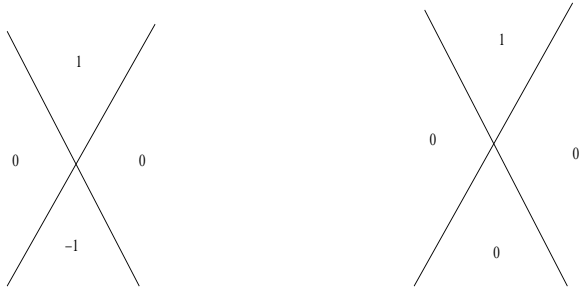, height=1.5in, width=3in}
\caption{Good (?) and bad symbols}
\label{fig1}
\end{figure}
On the other hand, if the $n$-linear Hilbert transforms were to satisfy some $L^p$ estimates, then by taking the difference between two generic ones, one would obtain the same $L^p$ estimates for
 $n$-linear operators given by symbols such as the ones described on the left hand (this time) side of Figure \ref{fig1}.

Let us end with a few remarks on the previous identity \eqref{2}. First of all, it can be suggestively rewritten as 

\begin{equation}\label{[3]}
\int_{[0,1]^{n-1}}
n HT_{\vec{\alpha}}(f, a, ..., a)(x) d\vec{\alpha} = p. v. \int_{\R}
\left( \frac{\Delta_t}{t} A(x) \right)^{n-1} f(x+t) \frac{d t}{t}
\end{equation}
where in general $\Delta_t g(x) : = g(x+t) - g(x)$ is the finite difference of the function $g$ at scale $t$. In \cite{camil2} the following generalization of it has been noticed

$$
\int_{[0,1]^{n-1}} ... \int_{[0,1]^{n-1}} n HT_{\vec{\alpha_1}, ..., \vec{\alpha_k} }    (f, a, ..., a)(x) d\vec{\alpha_1} ... d\vec{\alpha_k} 
$$

\begin{equation}\label{[4]}
= p. v. \int_{\R^k}
\left( \frac{\Delta_{t_1}}{t_1}\circ ... \circ \frac{\Delta_{t_k}}{t_k} A(x) \right)^{n-1} f( x+t_1 + ... + t_k) \frac{d t_1}{t_1} ... \frac{d t_k}{t_k}
\end{equation}
where this time $A^{(k)} = a$. It is interesting to mention that the linear operators on the right hand side of \eqref{[4]} {\it are} bounded on $L^p$ for every $1<p<\infty$ as long as 
$A^{(k)} \in L^{\infty}$. 

These operators appeared naturally in \cite{camil2} as part of a generalization of Calder\'{o}n's theory to classes of functions having arbitrary
{\it polynomial growth}. For more details, the reader is referred to the recent sequel of the author \cite{camil3}, \cite{camil2} and \cite{camil4} .

\end{document}